\documentclass[12pt, a4paper]{amsart} 
\usepackage{amssymb,amsfonts,amsmath, psfrag,eepic,colordvi,graphicx,epsfig,ytableau, booktabs}
\usepackage{amssymb,latexsym, graphics, array}
\usepackage{MnSymbol}
\usepackage[enableskew]{youngtab}
\usepackage{float, enumerate,enumitem}
\usepackage{tikz,ifthen}
\usepackage{pgfplots}
\usetikzlibrary{math}
\parskip 6pt\setlength{\unitlength}{1mm}

\topmargin 0 pt \textheight 40\baselineskip \advance\textheight by
\topskip  \setlength{\parskip}{0pt plus 0pt minus 0pt}
\setlength{\textwidth}{150mm} \setlength{\oddsidemargin}{5.6mm}
\setlength{\evensidemargin}{5.6mm}

\numberwithin{equation}{section}
\newtheorem{theorem}{Theorem}[section]

\newtheorem{corollary}[theorem]{Corollary}

\newtheorem{conjecture}[theorem]{Conjecture}

\newtheorem{lemma}[theorem]{Lemma}

\usepackage{makecell} 
\usepackage[numbers,sort&compress]{natbib}
\usepackage{ytableau}
\usepackage{textcomp}
\usepackage{xcolor}
\usetikzlibrary{backgrounds}
\usetikzlibrary{positioning, matrix}

\newcommand{\blackcirclenumber}[1]{%
    \tikz[baseline=(char.base)]{%
        \node[shape=circle, fill=black, inner sep=1.2pt, 
              minimum size=1em, text=white] (char) {\sffamily\bfseries\footnotesize #1};
    }%
}

\DeclareMathOperator{\ASM}{ASM}
\DeclareMathOperator{\HSASM}{HSASM}

\title[Doubly refined enumeration of Alternating Sign Matrices]{Yet another doubly refined enumeration of Alternating Sign Matrices}

\author{Guo-Niu Han}
\address{I.R.M.A., UMR 7501, Universit\'e de Strasbourg
et CNRS, 7 rue Ren\'e Descartes, F-67084 Strasbourg, France}
\email{guoniu.han@unistra.fr}

\author[Lihong Yang]{Lihong Yang $^*$}
\address{Institute for Advanced Study in Mathematics,
Harbin Institute of Technology,
Heilongjiang 150001, P.R. China}
\email{lihong.yang.math@gmail.com}
\address{I.R.M.A., UMR 7501, Universit\'e de Strasbourg
et CNRS, 7 rue Ren\'e Descartes, F-67084 Strasbourg, France}
\email{lihong.yang@unistra.fr}
\thanks{$^*$ Lihong Yang is the corresponding author.}

\subjclass[2020]{05A05, 05A15, 05E05}

\keywords{Alternating sign matrices, Six-vertex model, Square ice, Symmetric Function.}

\date{January 6, 2026}

\begin{document}

\begin{abstract} 
Since the alternating sign matrix conjecture, proposed by Mills, Robbins, and Rumsey in 1982, was proved by Zeilberger and Kuperberg, several refined enumerations have been considered.  
In particular, Behrend et al. obtained a quadruply refined enumeration by adding certain parameters.
In this paper, we revisit the doubly refined enumeration of alternating sign matrices
by adding three parameters: the number of $-1$'s, the position of the $1$ in the first row, 
and the position of the $1$ in the last row. 
Using Lascoux's formula on symmetry functions, we derive a new determinantal formula for this doubly refined enumeration. 

Besides the enumeration conjecture,  Mills et al. also proposed a decomposition conjecture, which was subsequently proven by Kuperberg.
We present a refinement of that decomposition conjecture.

\end{abstract}

\maketitle

\section{Introduction}  

An {\em alternating sign matrix} (ASM) of order $n$ is an $n \times n$ square matrix whose entries are either $0$, $1$, or $-1$. In such a matrix, the nonzero entries in each row and each column alternate between $1$ and $-1$, beginning and ending with $1$. For example, the following matrix
\begin{equation}\label{m:asm1}
\left(
\begin{array}{ccccc}
0 & 1 & 0 & 0 & 0 \\
1 & -1 & 0 & 1 & 0 \\
0 & 1 & 0 & -1 & 1 \\
0 & 0 & 0 & 1 & 0 \\
0 & 0 & 1 & 0 & 0
\end{array}
\right)
\end{equation}
is an ASM of order $5$.

Mills, Robbins, and Rumsey first discovered the concept of ASMs in their study of Dodgson’s method for computing determinants \cite{MRR1, MRR2, RR}.
They investigated the enumeration of ASMs and conjectured that the number of $n\times n$ ASMs is given by
\begin{equation}\label{asm}
    \prod_{j=1}^{n}\frac{(3j-2)!}{(n+j-1)!}.
\end{equation}
This is the famous Alternating Sign Matrix conjecture. 
In 1996, Zeilberger~\cite{Ze96a} provided the first proof of this conjecture by demonstrating the equivalence between ASMs and symmetric, self-complementary plane partitions. 
Later, Kuperberg~\cite{Ku96} gave a significantly different proof, using 
the six-vertex model and a determinantal formula due to Izergin and Korepin~\cite{Iz87, KBI93}.
Kuperberg's proof marked a major milestone in the study of ASMs and has inspired decades of further research in the field \cite{
Fi07, FK20a, FK20b, FK22, KBI93}.

A refined enumeration is another key direction in the study of ASMs, motivated by the observation that the first and last rows, as well as the leftmost and rightmost columns,
each contain exactly one $1$ and no $-1$s.
The positions of these unique boundary $1$s provide natural discrete parameters to refine the enumeration of ASMs.
Zeilberger first studied this refinement \cite{Ze96b}. By extending Kuperberg’s method, he proved that the number of $ n\times n$ ASMs whose unique $1$ in the first row lies in column $r$ is given by 
$$
     \frac{(r)_{n-1}(n+1-r)_{n-1}}{(n-1)!}\prod_{j=1}^{n}\frac{(3j-2)!}{(n+j-1)!}.
$$
Here $(a)_{n}=\prod_{i=0}^{n-1}(a+i)$. 
This result confirmed the conjecture of Mills, Robbins, and Rumsey~\cite{MRR1, MRR2}.

Subsequent work produced many results on doubly, triply, and quadruply refined enumerations, in which the positions of entries equal to $1$'s are fixed on boundary rows and/or columns.
Mills, Robbins, and Rumsey first introduced the double refinement enumeration \cite{MRR3}, refining ASMs by the positions of the $1$ in the top and bottom rows.
However, they did not obtain an explicit formula.
This gap was later filled by Fonseca and Zinn-Justin~\cite{FZ08} and Stroganov~\cite{St06}.
Related problems on doubly refined enumeration were further studied in \cite{FR09, Fi11, KR10, BEZ13}.
Progress on triply and quadruply refined enumerations appeared in \cite{AR13, Beh13, BEZ12}.
Behrend, Francesco, and Zinn-Justin~\cite{BEZ13}
also introduced two additional bulk statistics: the number of $-1$s and the inversion number of ASMs, and showed that the refined generating functions can be expressed as the determinant of a certain $n\times n$ matrix.

\medskip
Parallel to these works, we pursue the study of the 
explicit generating function for the doubly-refined enumeration of alternating sign matrices with respect to the number of $-1$'s
using a new approach. Our contributions span the following three directions:

\begin{enumerate}
    \item[(1)] We follow Kuperberg's method to derive the generating function for alternating sign matrices.  
    However,  our approach employs Lascoux's determinantal formula involving symmetric functions,  
    thereby avoiding the introduction of the superfluous variable $\epsilon$.
    \item[(2)] We obtain an explicit generating function for the doubly-refined enumeration of alternating sign matrices with respect to the number of $-1$'s. 
    Our formula (Theorem \ref{th:main}) is noticeably different from that of Behrend et al.
    \item[(3)] 
Although our final expression may appear complex, it gives rise to simple determinantal identities, such as Identity \eqref{eq:n!}, which seem to lack simple direct proofs.
\end{enumerate}

We now state our main results after introducing some notation. 
For each positive integer $n$, let $\ASM(n)$ denote the set of all $n\times n$ ASMs. 
The first of these sets is reproduced below:
$$
\begin{aligned}
&\ASM(1) = \bigl\{ (1) \bigr\}, \\
&\ASM(2) = \left\{
\begin{pmatrix} 1 & 0 \\ 0 & 1 \end{pmatrix},\;
\begin{pmatrix} 0 & 1 \\ 1 & 0 \end{pmatrix}
\right\}, \\
&\ASM(3) = \left\{
\begin{aligned}
& \begin{pmatrix} 1 & 0 & 0 \\ 0 & 1 & 0 \\ 0 & 0 & 1 \end{pmatrix},\quad
\begin{pmatrix} 0 & 1 & 0 \\ 1 & 0 & 0 \\ 0 & 0 & 1 \end{pmatrix},\quad
\begin{pmatrix} 1 & 0 & 0 \\ 0 & 0 & 1 \\ 0 & 1 & 0 \end{pmatrix},\quad
\begin{pmatrix} 0 & 1 & 0 \\ 0 & 0 & 1 \\ 1 & 0 & 0 \end{pmatrix}, \\
& \begin{pmatrix} 0 & 0 & 1 \\ 1 & 0 & 0 \\ 0 & 1 & 0 \end{pmatrix},\quad
\begin{pmatrix} 0 & 0 & 1 \\ 0 & 1 & 0 \\ 1 & 0 & 0 \end{pmatrix},\quad
\begin{pmatrix} 0 & 1 & 0 \\ 1 & -1 & 1 \\ 0 & 1 & 0 \end{pmatrix}
\end{aligned}
\right\}.
\end{aligned}
$$
For each $A \in \ASM(n)$, define the following statistics:
\begin{align*}
	\mu(A) &:  \text{the number of $-1$s in $A$}; \\
	f(A) &:  \text{the column index of the unique $1$ in the first row of $A$}; \\
	\ell(A) &: \text{the column index of the unique $1$ in the last row of $A$}.
\end{align*}
For example, consider 
$$A=\begin{pmatrix}
0 & 1 & 0 \\
1 & -1 & 1 \\
0 & 1 & 0
\end{pmatrix}\in \ASM(3),$$ 
then $\mu(A)=1,f(A)=2, \ell(A)=2$. 

\medskip

We  derive an explicit  generating function for the doubly-refined enumeration of alternating sign matrices with respect to the number of $-1$s
\begin{equation}\label{def:An}
A_n(z,\rho,\tau) = \sum_{A\in \ASM(n)} z^{\mu(A)} \rho^{f(A)} \tau^{\ell(A)}.
\end{equation}
In practice, the above expression takes a much simpler form when setting $z=2+q+q^{-1}$, as stated in the following theorem.
\begin{theorem}\label{th:main}
The generating function  for 
the $(2+q+q^{-1})$-enumeration of doubly-refined ASMs is
\begin{align}\label{eq:An:main}
    A_n\bigl(2+q + q^{-1}, \rho, \tau\bigr)
        &= \left( \frac{(1 + \rho q)(\tau + q)}{(1 + q)^2} \right)^{\!n - 1} 
					 \cdot \frac{\tau \rho \cdot \det{\bigl(K^{\rho,\tau}_{i,j}(q)\bigr)_{1 \le i,j \le n}}}{(2 - q - q^{-1})^{\frac{n(n - 1)}{2}}},
\end{align}
    where
\begin{align*}
	K^{\rho,\tau}_{i,j}(q) = &\sum_{k=-n+1}^{n} (-1)^{k+1} \frac{q^k - q^{-k}}{q - q^{-1}} 
    \binom{2j + k - 3}{i - 1} \left[ \binom{n - 2}{k + j - 3} \right.  \\
    & \left. + \left( \frac{\rho + q}{1 + \rho q} + \frac{1 + \tau q}{\tau + q} \right) \binom{n - 2}{k + j - 2} 
     + \frac{(\rho + q)(1 + \tau q)}{(1 + \rho q)(\tau + q)} \binom{n - 2}{k + j - 1} \right].
\end{align*}      
    
\end{theorem}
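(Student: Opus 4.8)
The plan is to realize $A_n\bigl(2+q+q^{-1},\rho,\tau\bigr)$ as a specialization of the six-vertex partition function with domain wall boundary conditions, and then to evaluate that partition function by Lascoux's determinantal formula rather than by Izergin–Korepin, which is precisely what lets us avoid the auxiliary variable $\epsilon$. First I would recall the bijection between ASMs of order $n$ and six-vertex configurations with domain wall boundaries, under which the $-1$ entries correspond to one distinguished vertex type. I would then choose Boltzmann weights, parametrized by $q$ on each horizontal line $x_i$ and each vertical line $y_j$, so that after a fully homogeneous specialization every configuration acquires weight $z^{\mu(A)}$ with $z=2+q+q^{-1}$, up to the global normalization that will ultimately produce the prefactors $\bigl((1+\rho q)(\tau+q)/(1+q)^2\bigr)^{n-1}$ and $(2-q-q^{-1})^{-n(n-1)/2}$. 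Crucially, I would keep the horizontal spectral parameters of the top row and the bottom row \emph{free}: these two free rows will carry the statistics $f(A)$ and $\ell(A)$, and hence the variables $\rho$ and $\tau$, while the remaining $n-2$ rows and all $n$ columns are specialized to a common value.

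Next I would apply Lascoux's formula to write the (still partially inhomogeneous) partition function as an $n\times n$ determinant whose entries are binomial-type polynomials in the spectral parameters. Taking the $n-2$ interior rows and all $n$ columns to their common homogeneous value is then a controlled coincident-limit computation: the binomial theorem applied to the $n-2$ homogeneous rows produces the factors $\binom{n-2}{\,\cdot\,}$ together with the $q$-integer $\dfrac{q^{k}-q^{-k}}{q-q^{-1}}$ (the summation index $k$ recording the internal path degree of freedom), while the homogeneous columns contribute the factor $\binom{2j+k-3}{i-1}$ and fix the range $-n+1\le k\le n$ and the sign $(-1)^{k+1}$. The two free boundary rows contribute separately, and I would record their effect through the two ratios of Boltzmann weights
\[
\alpha=\frac{\rho+q}{1+\rho q},\qquad \beta=\frac{1+\tau q}{\tau+q},
\]
which form the dictionary between the position statistics and the free spectral parameters.

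The conceptual heart of the argument is then the following algebraic identification. Writing $E$ for the shift operator that raises the lower index of a binomial coefficient by one, the combined contribution of the top and bottom rows is the operator $(1+\alpha E)(1+\beta E)$ acting on $\binom{n-2}{k+j-3}$:
\[
(1+\alpha E)(1+\beta E)\binom{n-2}{k+j-3}
=\binom{n-2}{k+j-3}+(\alpha+\beta)\binom{n-2}{k+j-2}+\alpha\beta\binom{n-2}{k+j-1}.
\]
Since $\alpha+\beta=\dfrac{\rho+q}{1+\rho q}+\dfrac{1+\tau q}{\tau+q}$ and $\alpha\beta=\dfrac{(\rho+q)(1+\tau q)}{(1+\rho q)(\tau+q)}$, the right-hand side is exactly the bracketed three-term expression in $K^{\rho,\tau}_{i,j}(q)$. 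Assembling the $k$-sum, the $q$-integer, and the column binomial then yields the matrix entries $K^{\rho,\tau}_{i,j}(q)$; pulling the overall normalization out of the determinant (the $\binom{n}{2}=\tfrac{n(n-1)}2$ coincident pairs supply the power of $2-q-q^{-1}$, and the two free rows supply the $(n-1)$-st powers and the factor $\tau\rho$) completes the identification with \eqref{eq:An:main}.

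I expect the main obstacle to be the coincident-parameter limit and the associated bookkeeping of prefactors. Lascoux's formula must be pushed through the limit in which $n-2$ row parameters and all column parameters collide, and one has to check that this limit produces precisely the shifted binomials with the stated arguments, that the exponents of the prefactors come out as $n-1$ and $n(n-1)/2$, and that no spurious factors survive. A secondary but genuine subtlety is verifying the dictionary itself: one must confirm that $f(A)$ and $\ell(A)$, which live on the top and bottom boundary rows, are faithfully tracked by the free horizontal parameters, and that the Möbius substitutions $\rho\mapsto\alpha$ and $\tau\mapsto\beta$ are the correct translation between the ASM position statistics and the six-vertex spectral parameters; this is the step where a sign or an off-by-one in the boundary vertex types would propagate into the final formula.
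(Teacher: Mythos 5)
Your proposal follows essentially the same route as the paper's proof: free spectral parameters on the top and bottom rows only (all columns and the $n-2$ interior rows taken homogeneous), Lascoux's matrix-product formula to survive the coincident specialization without Kuperberg's $\epsilon$-perturbation, the M\"obius dictionary $\phi=\frac{\rho+q}{1+\rho q}$, $\psi=\frac{1+\tau q}{\tau+q}$, and the three-term binomial bracket arising as the product of the two boundary factors --- precisely the content of the paper's Theorem~\ref{th:A=CZ} and Lemmas~\ref{lem:Zn} and \ref{lem:Cn}. The only slight misstatement is that Lascoux's formula does not replace Izergin--Korepin but is applied on top of it (the paper first uses IK to write $Z_n$ as a Cauchy-type alternant, then Lascoux to refactor it so the homogeneous limit is harmless); otherwise your plan matches the paper's argument.
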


While the derived expression may seem intricate, it nevertheless leads to simple determinantal identities, which seem to lack simple direct proofs.
For example, setting $q=-1, \tau=\rho=1$ in Theorem~\ref{th:main}, we obtain the following determinantal identity, 
since the number $A_n(0,1,1)$ enumerates the $n \times n$ permutation matrices.
\begin{corollary}\label{cor:n!}
Let $n$ be a positive integer. We have 
\begin{equation}\label{eq:n!}
	\det{\bigl(K^{1,1}_{i,j}(-1)\bigr)_{1 \le i,j \le n}}=4^{\frac{n(n-1)}{2}}n!,
\end{equation}
where
$$
	K^{1,1}_{i,j}(-1)=\sum_{k=-n+1}^{n}k\binom{2j+k-3}{i-1}\binom{n}{j+k-1}.
$$
\end{corollary}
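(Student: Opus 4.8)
The plan is to obtain the identity as the specialization $q = -1$, $\rho = \tau = 1$ of Theorem~\ref{th:main}, treating the several $0/0$ expressions that arise as removable singularities. First I would set $\rho = \tau = 1$ while keeping $q$ generic. Then each of the three rational coefficients inside the bracket simplifies: $\frac{\rho+q}{1+\rho q} = \frac{1+\tau q}{\tau+q} = 1$ and $\frac{(\rho+q)(1+\tau q)}{(1+\rho q)(\tau+q)} = 1$, so the bracket becomes $\binom{n-2}{k+j-3} + 2\binom{n-2}{k+j-2} + \binom{n-2}{k+j-1}$. Recognizing this as the coefficient of $x^{k+j-1}$ in $(1+x)^2(1+x)^{n-2} = (1+x)^n$ (equivalently, two applications of Pascal's rule) collapses it to $\binom{n}{k+j-1}$, which gives $K^{1,1}_{i,j}(q) = \sum_{k} (-1)^{k+1}\frac{q^k-q^{-k}}{q-q^{-1}}\binom{2j+k-3}{i-1}\binom{n}{k+j-1}$.

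Next I would pass to the limit $q\to -1$. The coefficient $\frac{q^k-q^{-k}}{q-q^{-1}}$ is a Laurent polynomial in $q$ (a balanced $q$-integer), so it extends continuously to $q=-1$; writing $q = e^u$ and expanding near $u = i\pi$ (or applying L'Hôpital) gives $\lim_{q\to-1}\frac{q^k-q^{-k}}{q-q^{-1}} = (-1)^{k+1}k$, hence $(-1)^{k+1}\frac{q^k-q^{-k}}{q-q^{-1}}\to k$ and the entries converge to the stated $K^{1,1}_{i,j}(-1) = \sum_{k} k\binom{2j+k-3}{i-1}\binom{n}{k+j-1}$. Since the determinant is a polynomial in its entries, it is continuous, so $\det(K^{1,1}_{i,j}(q)) \to \det(K^{1,1}_{i,j}(-1))$ as $q\to-1$.

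It remains to evaluate the remaining factors in Theorem~\ref{th:main}. After setting $\rho = \tau = 1$, the prefactor $\left(\frac{(1+\rho q)(\tau+q)}{(1+q)^2}\right)^{n-1} = \left(\frac{(1+q)^2}{(1+q)^2}\right)^{n-1}$ is identically $1$ as a function of $q$ (a removable singularity), and $\tau\rho = 1$; meanwhile $2 - q - q^{-1} \to 2 - (-1) - (-1) = 4$, so the denominator tends to $4^{n(n-1)/2}$. On the left-hand side, $z = 2 + q + q^{-1} \to 0$, and $A_n(0,1,1)$ retains only the terms with $\mu(A) = 0$, i.e. the ASMs with no $-1$ entry, which are exactly the permutation matrices; with $\rho = \tau = 1$ this counts them, giving $A_n(0,1,1) = n!$. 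Substituting everything into Theorem~\ref{th:main} yields $n! = \det(K^{1,1}_{i,j}(-1))/4^{n(n-1)/2}$, which rearranges to Identity~\eqref{eq:n!}.

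The only delicate point is the bookkeeping of the three removable singularities at $q = -1$: one must fix $\rho = \tau = 1$ first, so that the prefactor and the bracket coefficients are genuine constants rather than limits taken along a curve in $(q,\rho,\tau)$-space, and only then let $q\to -1$, invoking continuity of the Laurent-polynomial entries and of the determinant. None of the steps requires a genuinely new computation beyond Theorem~\ref{th:main}; the content is entirely in justifying that the naive substitution is legitimate.
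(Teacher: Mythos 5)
Your proposal is correct and takes essentially the same route as the paper: the paper obtains Corollary~\ref{cor:n!} precisely by specializing Theorem~\ref{th:main} at $\rho=\tau=1$, $q=-1$, using that $A_n(0,1,1)=n!$ counts the permutation matrices. Your extra care with the removable singularities (fixing $\rho=\tau=1$ first, collapsing the bracket to $\binom{n}{k+j-1}$ by Pascal's rule, then letting $q\to-1$ by continuity of the Laurent-polynomial entries) merely makes rigorous what the paper states as a direct substitution.
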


This paper is organized as follows.
In Section~\ref{sec:model}, we review the fundamental connection between ASMs and the six-vertex model from statistical mechanics. 
Fortunately, the explicit generating function for the latter model was established by Izergin and Korepin. 
In Section~\ref{sec:Lascoux}, we recall a determinantal formula involving symmetric functions due to Lascoux.
The combination of Lascoux's formula with the Izergin–Korepin theorem plays a crucial role in the proof of Theorem~\ref{th:main},
which we develop fully in Section \ref{sec:proof_mth}.
As an application, we derive several determinantal formulas in Section~\ref{sec:applications}.
Mills et al. proposed a decomposition conjecture, which was subsequently proven by Kuperberg.
In Section~\ref{sec:comparison}, we compare our results with previously known results. 
We present a refinement of that decomposition conjecture in Section~\ref{sec:conj}.


\section{Six-vertex model}\label{sec:model}  
There is a bijection between the six-vertex model and alternating sign matrices. In this section, we introduce this bijection and review the work of Kuperberg~\cite{Ku96}.
The six-vertex model is concerned with the multiplicative weighted enumeration of arrow configurations, commonly referred to as states, on the edges of a planar tetravalent graph $G$ (see Fig.~\ref{fig:md2}). 
At each vertex, four edges meet, and the arrows must satisfy the "zero divergence" rule: two arrows point in and two point out.
The six allowed arrow configurations around a vertex are called vertex states and are usually labeled $1$ to $6$. 
For clarity, we denote them by $\blackcirclenumber{1}$ to $\blackcirclenumber{6}$, as shown in Fig.~\ref{fig:md2}.
The six-vertex model on an $n\times n$ square grid with the following boundary conditions is also called {\em square ice}~\cite{Ku96}: 
The left and right boundary edges point inward, and the top and bottom boundary edges point outward (see Fig.~\ref{fig:bij}).
Elkies et al. \cite{EKLP} established an explicit bijection between square ice configurations and $n\times n$ ASMs,
which maps each state to a matrix entry, as shown in Fig.~\ref{fig:md2}. 
The inverse of that bijection is denoted by $\varphi: A \mapsto G$, which maps an ASM $A$ to a square ice configuration $G$.
Fig.~\ref{fig:bij} illustrates the square ice configuration corresponding to the ASM in (\ref{m:asm1}).

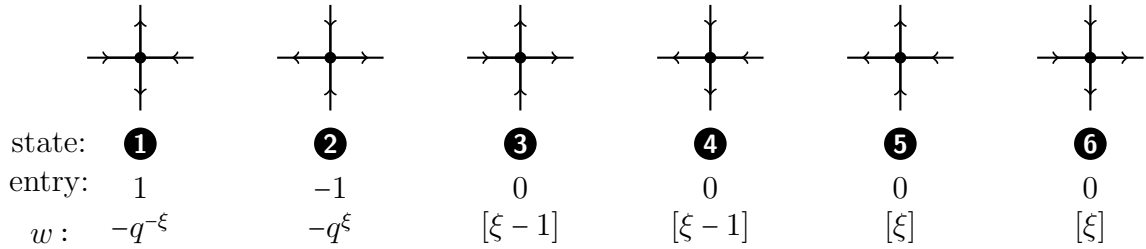
\begin{figure} [h] 
 	\centering
\begin{tikzpicture}
 		\draw[>-<, thick] (0,0)--(1,0); 
            \draw[-, thick] (-0.2,0)--(1.2,0);
            \draw[<->, thick] (0.5,-0.5)--(0.5,0.5);
            \draw[-, thick] (0.5,-0.7)--(0.5,0.7);
            \coordinate [label=above:$\blackcirclenumber{1}$] (x) at (0.5,-1.5);
            \filldraw[fill=black](0.5,0)circle(0.07);
           \coordinate [label=above:$1$] (x) at (0.5,-2);
           \coordinate [label=above:$-q^{-\xi}$] (x) at (0.5,-2.6);
           
           \draw[<->, thick] (2.5,0)--(3.5,0);
           \draw[-, thick] (2.3,0)--(3.7,0);
           \draw[>-<, thick] (3,-0.5)--(3,0.5);
           \draw[-, thick] (3,-0.7)--(3,0.7);
           \coordinate [label=above:$\blackcirclenumber{2}$] (x) at (3,-1.5);
           \filldraw[fill=black](3,0)circle(0.07);
           \coordinate [label=above:$-1$] (x) at (3,-2);
           \coordinate [label=above:$-q^{\xi}$] (x) at (3,-2.6);
           
           \draw[>->, thick] (5,0)--(6,0); 
           \draw[-, thick] (4.8,0)--(6.2,0);
           \draw[>->, thick] (5.5,-0.5)--(5.5,0.5);
           \draw[-, thick] (5.5,-0.7)--(5.5,0.7);
           \coordinate [label=above:$\blackcirclenumber{3}$] (x) at (5.5,-1.5);
           \filldraw[fill=black](5.5,0)circle(0.07);
           \coordinate [label=above:$0$] (x) at (5.5,-2);
           \coordinate [label=above:$\mbox{[$\xi-1$]}$] (x) at (5.5,-2.6);
           
           \draw[<-<, thick] (7.5,0)--(8.5,0); 
           \draw[-, thick] (7.3,0)--(8.7,0);
           \draw[<-<, thick] (8,-0.5)--(8,0.5);
           \draw[-, thick] (8,-0.7)--(8,0.7);
           \coordinate [label=above:$\blackcirclenumber{4}$] (x) at (8,-1.5);
           \filldraw[fill=black](8,0)circle(0.07);
           \coordinate [label=above:$0$] (x) at (8,-2);
           \coordinate [label=above:$\mbox{[$\xi-1$]}$] (x) at (8,-2.6);
           
           \draw[<-<, thick] (10,0)--(11,0); 
           \draw[-, thick] (9.8,0)--(11.2,0);
           \draw[>->, thick] (10.5,-0.5)--(10.5,0.5);
           \draw[-, thick] (10.5,-0.7)--(10.5,0.7);
           \coordinate [label=above:$\blackcirclenumber{5}$] (x) at (10.5,-1.5);
           \filldraw[fill=black](10.5,0)circle(0.07);
           \coordinate [label=above:$0$] (x) at (10.5,-2);
           \coordinate [label=above:$\mbox{[$\xi$]}$] (x) at (10.5,-2.6);
           
           \draw[>->, thick] (12.5,0)--(13.5,0); 
           \draw[-,  thick] (12.3,0)--(13.7,0);
           \draw[<-<,  thick] (13,-0.5)--(13,0.5);
           \draw[-,  thick] (13,-0.7)--(13,0.7);
        \coordinate [label=above:$\blackcirclenumber{6}$] (x) at (13,-1.5);
        \filldraw[fill=black](13,0)circle(0.07);
        \coordinate [label=above:$0$] (x) at (13,-2);
        \coordinate [label=above:$\mbox{[$\xi$]}$] (x) at (13,-2.6);

        \coordinate [label=above:$\mbox{state:}$] (x) at (-0.7,-1.4);
        \coordinate [label=above:$\mbox{entry:}$] (x) at (-0.7,-2);
        \coordinate [label=above:$w:$] (x) at (-0.7,-2.6);
        
  \end{tikzpicture}
\caption{The six vertex states, their corresponding matrix entries, and associated weights.}\label{fig:md2}
 \end{figure} 

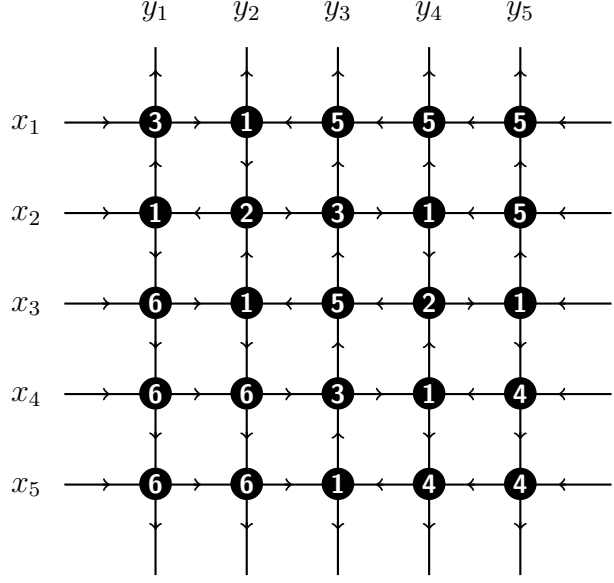
\begin{figure} 
    \centering
   \begin{tikzpicture}

   \coordinate [label=above:$\blackcirclenumber{6}$] (x) at (1.2,-0.35);
   \coordinate [label=above:$\blackcirclenumber{6}$] (x) at (2.4,-0.35);
   \coordinate [label=above:$\blackcirclenumber{1}$] (x) at (3.6,-0.35);
   \coordinate [label=above:$\blackcirclenumber{4}$] (x) at (4.8,-0.35);
   \coordinate [label=above:$\blackcirclenumber{4}$] (x) at (6,-0.35);

  \coordinate [label=above:$\blackcirclenumber{6}$] (x) at (1.2,0.85);
   \coordinate [label=above:$\blackcirclenumber{6}$] (x) at (2.4,0.85);
   \coordinate [label=above:$\blackcirclenumber{3}$] (x) at (3.6,0.85);
   \coordinate [label=above:$\blackcirclenumber{1}$] (x) at (4.8,0.85);
   \coordinate [label=above:$\blackcirclenumber{4}$] (x) at (6,0.85);

   \coordinate [label=above:$\blackcirclenumber{6}$] (x) at (1.2,2.05);
   \coordinate [label=above:$\blackcirclenumber{1}$] (x) at (2.4,2.05);
   \coordinate [label=above:$\blackcirclenumber{5}$] (x) at (3.6,2.05);
   \coordinate [label=above:$\blackcirclenumber{2}$] (x) at (4.8,2.05);
   \coordinate [label=above:$\blackcirclenumber{1}$] (x) at (6,2.05);

  \coordinate [label=above:$\blackcirclenumber{1}$] (x) at (1.2,3.25);
   \coordinate [label=above:$\blackcirclenumber{2}$] (x) at (2.4,3.25);
   \coordinate [label=above:$\blackcirclenumber{3}$] (x) at (3.6,3.25);
   \coordinate [label=above:$\blackcirclenumber{1}$] (x) at (4.8,3.25);
   \coordinate [label=above:$\blackcirclenumber{5}$] (x) at (6,3.25);

   \coordinate [label=above:$\blackcirclenumber{3}$] (x) at (1.2,4.45);
   \coordinate [label=above:$\blackcirclenumber{1}$] (x) at (2.4,4.45);
   \coordinate [label=above:$\blackcirclenumber{5}$] (x) at (3.6,4.45);
   \coordinate [label=above:$\blackcirclenumber{5}$] (x) at (4.8,4.45);
   \coordinate [label=above:$\blackcirclenumber{5}$] (x) at (6,4.45);

   \coordinate [label=above:$x_1$] (x) at (-0.5,4.5);
   \coordinate [label=above:$x_2$] (x) at (-0.5,3.3);
   \coordinate [label=above:$x_3$] (x) at (-0.5,2.1);
   \coordinate [label=above:$x_4$] (x) at (-0.5,0.9);
   \coordinate [label=above:$x_5$] (x) at (-0.5,-0.3);

   \coordinate [label=above:$y_1$] (x) at (1.2,6);
   \coordinate [label=above:$y_2$] (x) at (2.4,6);
   \coordinate [label=above:$y_3$] (x) at (3.6,6);
   \coordinate [label=above:$y_4$] (x) at (4.8,6);
   \coordinate [label=above:$y_5$] (x) at (6,6);

   \draw[-, thick] (0,0)--(1,0);
   \draw[-, thick] (1.4,0)--(2.2,0);
    \draw[-, thick] (2.6,0)--(3.4,0);
    \draw[-, thick] (3.8,0)--(4.6,0);
    \draw[-, thick] (5,0)--(5.8,0);
    \draw[-, thick] (6.2,0)--(7.2,0);

     \draw[-, thick] (0,1.2)--(1,1.2);
   \draw[-, thick] (1.4,1.2)--(2.2,1.2);
    \draw[-, thick] (2.6,1.2)--(3.4,1.2);
    \draw[-, thick] (3.8,1.2)--(4.6,1.2);
    \draw[-, thick] (5,1.2)--(5.8,1.2);
    \draw[-, thick] (6.2,1.2)--(7.2,1.2);

    \draw[-, thick] (0,2.4)--(1,2.4);
   \draw[-, thick] (1.4,2.4)--(2.2,2.4);
    \draw[-, thick] (2.6,2.4)--(3.4,2.4);
    \draw[-, thick] (3.8,2.4)--(4.6,2.4);
    \draw[-, thick] (5,2.4)--(5.8,2.4);
    \draw[-, thick] (6.2,2.4)--(7.2,2.4);

    \draw[-, thick] (0,3.6)--(1,3.6);
   \draw[-, thick] (1.4,3.6)--(2.2,3.6);
    \draw[-, thick] (2.6,3.6)--(3.4,3.6);
    \draw[-, thick] (3.8,3.6)--(4.6,3.6);
    \draw[-, thick] (5,3.6)--(5.8,3.6);
    \draw[-, thick] (6.2,3.6)--(7.2,3.6);

    \draw[-, thick] (0,4.8)--(1,4.8);
   \draw[-, thick] (1.4,4.8)--(2.2,4.8);
    \draw[-, thick] (2.6,4.8)--(3.4,4.8);
    \draw[-, thick] (3.8,4.8)--(4.6,4.8);
    \draw[-, thick] (5,4.8)--(5.8,4.8);
    \draw[-, thick] (6.2,4.8)--(7.2,4.8);

    \draw[-, thick] (1.2,-1.2)--(1.2,-0.2);
    \draw[-, thick] (1.2,0.2)--(1.2,1);
    \draw[-, thick] (1.2,1.4)--(1.2,2.2);
    \draw[-, thick] (1.2,2.6)--(1.2,3.4);
    \draw[-, thick] (1.2,3.8)--(1.2,4.6);
    \draw[-, thick] (1.2,5)--(1.2,5.8);

     \draw[-, thick] (2.4,-1.2)--(2.4,-0.2);
    \draw[-, thick] (2.4,0.2)--(2.4,1);
    \draw[-, thick] (2.4,1.4)--(2.4,2.2);
    \draw[-, thick] (2.4,2.6)--(2.4,3.4);
    \draw[-, thick] (2.4,3.8)--(2.4,4.6);
    \draw[-, thick] (2.4,5)--(2.4,5.8);

     \draw[-, thick] (3.6,-1.2)--(3.6,-0.2);
    \draw[-, thick] (3.6,0.2)--(3.6,1);
    \draw[-, thick] (3.6,1.4)--(3.6,2.2);
    \draw[-, thick] (3.6,2.6)--(3.6,3.4);
    \draw[-, thick] (3.6,3.8)--(3.6,4.6);
    \draw[-, thick] (3.6,5)--(3.6,5.8);

     \draw[-, thick] (4.8,-1.2)--(4.8,-0.2);
    \draw[-, thick] (4.8,0.2)--(4.8,1);
    \draw[-, thick] (4.8,1.4)--(4.8,2.2);
    \draw[-, thick] (4.8,2.6)--(4.8,3.4);
    \draw[-, thick] (4.8,3.8)--(4.8,4.6);
    \draw[-, thick] (4.8,5)--(4.8,5.8);

     \draw[-, thick] (6,-1.2)--(6,-0.2);
    \draw[-, thick] (6,0.2)--(6,1);
    \draw[-, thick] (6,1.4)--(6,2.2);
    \draw[-, thick] (6,2.6)--(6,3.4);
    \draw[-, thick] (6,3.8)--(6,4.6);
    \draw[-, thick] (6,5)--(6,5.8);

    \draw[->, thick] (0.5,0)--(0.6,0);
    \draw[->, thick] (1.7,0)--(1.8,0);
    \draw[->, thick] (2.9,0)--(3,0);
    \draw[<-, thick] (4.1,0)--(4.2,0);
    \draw[<-, thick] (5.3,0)--(5.4,0);
    \draw[<-, thick] (6.5,0)--(6.6,0);

     \draw[->, thick] (0.5,1.2)--(0.6,1.2);
    \draw[->, thick] (1.7,1.2)--(1.8,1.2);
    \draw[->, thick] (2.9,1.2)--(3,1.2);
    \draw[>-, thick] (4.1,1.2)--(4.2,1.2);
    \draw[<-, thick] (5.3,1.2)--(5.4,1.2);
    \draw[<-, thick] (6.5,1.2)--(6.6,1.2);

     \draw[->, thick] (0.5,2.4)--(0.6,2.4);
    \draw[->, thick] (1.7,2.4)--(1.8,2.4);
    \draw[<-, thick] (2.9,2.4)--(3,2.4);
    \draw[<-, thick] (4.1,2.4)--(4.2,2.4);
    \draw[->, thick] (5.3,2.4)--(5.4,2.4);
    \draw[<-, thick] (6.5,2.4)--(6.6,2.4);

     \draw[->, thick] (0.5,3.6)--(0.6,3.6);
    \draw[<-, thick] (1.7,3.6)--(1.8,3.6);
    \draw[->, thick] (2.9,3.6)--(3,3.6);
    \draw[->, thick] (4.1,3.6)--(4.2,3.6);
    \draw[<-, thick] (5.3,3.6)--(5.4,3.6);
    \draw[<-, thick] (6.5,3.6)--(6.6,3.6);

     \draw[->, thick] (0.5,4.8)--(0.6,4.8);
    \draw[->, thick] (1.7,4.8)--(1.8,4.8);
    \draw[<-, thick] (2.9,4.8)--(3,4.8);
    \draw[<-, thick] (4.1,4.8)--(4.2,4.8);
    \draw[<-, thick] (5.3,4.8)--(5.4,4.8);
    \draw[<-, thick] (6.5,4.8)--(6.6,4.8);

    \draw[<-, thick] (1.2,-0.6)--(1.2,-0.5);
    \draw[<-, thick] (1.2,0.6)--(1.2,0.7);
    \draw[<-, thick] (1.2,1.8)--(1.2,1.9);
    \draw[<-, thick] (1.2,3)--(1.2,3.1);
    \draw[->, thick] (1.2,4.2)--(1.2,4.3);
    \draw[->, thick] (1.2,5.4)--(1.2,5.5);

    \draw[<-, thick] (2.4,-0.6)--(2.4,-0.5);
    \draw[<-, thick] (2.4,0.6)--(2.4,0.7);
    \draw[<-, thick] (2.4,1.8)--(2.4,1.9);
    \draw[->, thick] (2.4,3)--(2.4,3.1);
    \draw[<-, thick] (2.4,4.2)--(2.4,4.3);
    \draw[->, thick] (2.4,5.4)--(2.4,5.5);

    \draw[<-, thick] (3.6,-0.6)--(3.6,-0.5);
    \draw[->, thick] (3.6,0.6)--(3.6,0.7);
    \draw[->, thick] (3.6,1.8)--(3.6,1.9);
    \draw[->, thick] (3.6,3)--(3.6,3.1);
    \draw[->, thick] (3.6,4.2)--(3.6,4.3);
    \draw[->, thick] (3.6,5.4)--(3.6,5.5);

    \draw[<-, thick] (4.8,-0.6)--(4.8,-0.5);
    \draw[<-, thick] (4.8,0.6)--(4.8,0.7);
    \draw[->, thick] (4.8,1.8)--(4.8,1.9);
    \draw[<-, thick] (4.8,3)--(4.8,3.1);
    \draw[->, thick] (4.8,4.2)--(4.8,4.3);
    \draw[->, thick] (4.8,5.4)--(4.8,5.5);

    \draw[<-, thick] (6,-0.6)--(6,-0.5);
    \draw[<-, thick] (6,0.6)--(6,0.7);
    \draw[<-, thick] (6,1.8)--(6,1.9);
    \draw[->, thick] (6,3)--(6,3.1);
    \draw[->, thick] (6,4.2)--(6,4.3);
    \draw[->, thick] (6,5.4)--(6,5.5);

   \end{tikzpicture}
    \caption{ The square ice configuration corresponding to the ASM in (\ref{m:asm1}).}
    \label{fig:bij}
\end{figure} 

For $ 1 \leq i \leq 6$, let $n_{i}$ denote the number of vertices in state $i$. 
By the properties of the square ice model, in any $n\times n$ configuration, the number of vertices in state $1$ exceeds that in state $2$ by exactly $n$, while the numbers of vertices in states $3$ and $4$ are equal, as are those in states $5$ and $6$. That is,
\begin{equation}\label{ni}
n_1 = n + n_2, \quad n_3 = n_4, \quad \text{and} \quad n_5 = n_6.
\end{equation}
For example, in the $5 \times 5$ configuration shown in Fig.~\ref{fig:bij}, we have $n_1=7$, $n_2=2$, $n_3=3$, $n_4=3$, $n_5=5$, and $n_6=5$.

Given two sets of  parameters $X=\{x_1,\dots,x_n\}$ and $Y=\{y_1,\dots,y_n\}$. 
For a square ice configuration $G$ and a vertex $v=(i,j)$ in $G$, 
we assign a weight $w(v):=w_s(v)=w_s(x_i,y_j)$ according to its state $s$ in $G$.  
Let $\xi = x_i - y_j$ and define 
$$
[\xi]=\frac{q^{\xi}-q^{-\xi}}{q-q^{-1}}.
$$
The weights of the six vertex states are shown in the last line of Fig.~\ref{fig:md2}. 

The weight of a square ice configuration $G$ is the product of the weights of all vertices in  $G$:
$$
W(G)=\prod_{v \in G}w(v).
$$
For example,  in the $5 \times 5$ configuration shown in Fig.~\ref{fig:bij},  if $x_i=x$ and $y_i=y$ for all $1\le i\le 5$, then the weight is
$$
(-q^{-\xi})^7\cdot(-q^\xi)^2\cdot[\xi-1]^3\cdot[\xi-1]^3\cdot[\xi]^5\cdot[\xi]^5=-q^{-5\xi}\cdot[\xi-1]^6\cdot[\xi]^{10}.
$$

Let $\mathbb{S}_n$ be the set of all $n\times n$ square ice configurations.  
The partition function of the square ice configuration is given by
\begin{equation}\label{def:Zn}
Z_n(X,Y)=\sum_{G \in \mathbb{S}_n} W(G)=\sum_{G \in \mathbb{S}_n} \prod_{v \in G}w(v)
=\sum_{G \in \mathbb{S}_n} \prod_{v \in G}w_s(x_i,y_j),
\end{equation}
where the vertex $v$ is located at position $(i,j)$ in $G$ and is in state $s$.
Izergin~\cite{Iz87} and Korepin~\cite{KBI93} obtained an explicit formula for $Z_n(X,Y)$,
  which plays a key role in proving our main results.

\begin{theorem}[Izergin, Korepin]\label{ikthv1} 
We have   
\begin{equation*}
        Z_n(X,Y)=\frac{(-1)^n(\prod_{i=1}^{n}q^{y_i-x_i})\prod_{1 \leq i,j\leq n}[x_i-y_j][x_i-y_j-1]}{(\prod_{1\leq j <i \leq n}[x_i-x_j])(\prod_{1 \leq i <j \leq n}[y_i-y_j])}\det{\bigl(M_{i,j}\bigr)_{1 \le i,j \le n}},
    \end{equation*}
    where
    $$M_{i,j}=\frac{1}{[x_i-y_j][x_i-y_j-1]}.$$
\end{theorem}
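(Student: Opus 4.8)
The plan is to follow Korepin's uniqueness method: I would first isolate a short list of properties that characterize $Z_n(X,Y)$ uniquely as a function of the spectral parameters, and then verify that the determinantal expression on the right-hand side satisfies the same list. Throughout it is convenient to regard $Z_n$ as a function of a single variable, say $x_n$, with the remaining $x_i$ and all of the $y_j$ held fixed, and to work in the multiplicative variable $q^{x_n}$.

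First I would establish the structural properties of the left-hand side directly from the combinatorial definition \eqref{def:Zn}. \emph{(a) Symmetry}: $Z_n$ is separately symmetric in $X$ and in $Y$. This is the one genuinely nontrivial input, and it comes from the Yang--Baxter equation: the trigonometric weights of Fig.~\ref{fig:md2} satisfy the star--triangle relation, so two adjacent rows (resp.\ columns) of the grid can be interchanged by sliding an $R$-matrix through the lattice, leaving the boundary arrows intact and hence $Z_n$ unchanged. \emph{(b) Degree}: each vertex weight in row $n$ is, as a function of $q^{x_n}$, of the form $\alpha\, q^{x_n}+\beta\, q^{-x_n}$, so after extracting a monomial prefactor the product over row $n$ is a Laurent polynomial; the zero-divergence rule together with the domain-wall boundary cuts the naive degree down, leaving $Z_n$ equal to a monomial times a polynomial of degree at most $n-1$ in $q^{2x_n}$. \emph{(c) Recursion}: specializing $x_n=y_n$ makes $[\,x_n-y_n\,]=0$, so states $\blackcirclenumber{5}$ and $\blackcirclenumber{6}$ are forbidden at the corner vertex $(n,n)$; the domain-wall boundary then forces that corner into a single admissible state and propagates, freezing the entire last row and last column and yielding $Z_n\big|_{x_n=y_n}=c_n\,Z_{n-1}(\hat X,\hat Y)$ for an explicit scalar $c_n$, where $\hat X,\hat Y$ are the parameter sets with $x_n,y_n$ deleted. \emph{(d) Initial value}: $Z_1$ is simply the weight of the single state-$\blackcirclenumber{1}$ vertex.

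The uniqueness lemma then reads as follows: a function that is symmetric in $X$, of degree at most $n-1$ in $q^{2x_n}$, and whose specialization at $x_n=y_n$ equals $c_n\,Z_{n-1}$, is completely determined by $Z_{n-1}$. Indeed, symmetry in $Y$ upgrades the single recursion point to the $n$ distinct interpolation points $x_n\in\{y_1,\dots,y_n\}$, and a polynomial of degree at most $n-1$ in $q^{2x_n}$ is pinned down by $n$ values; induction on $n$ anchored at (d) then determines $Z_n$ uniquely.

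It remains to check that the right-hand side of the claimed formula obeys (a)--(d). The degree count (b) and the base case (d) are immediate from the shape of $M_{i,j}$. The symmetry (a) is the delicate point on this side: the determinant $\det(M_{i,j})$ is antisymmetric under exchanging two $x_i$ (two rows) and, separately, two $y_j$ (two columns), and these sign changes are cancelled exactly by the antisymmetric Vandermonde-type denominators $\prod_{j<i}[x_i-x_j]$ and $\prod_{i<j}[y_i-y_j]$, so the quotient is symmetric as required. For the recursion (c) I would set $x_n=y_n$ and expand the determinant along its last row: the entry $M_{n,n}$ carries a pole from $[\,x_n-y_n\,]$ in its denominator, and after combining with the numerator product $\prod_{i,j}[x_i-y_j][x_i-y_j-1]$ this singularity is resolved so that only the cofactor built from the $(n-1)\times(n-1)$ matrix survives, reproducing $c_n\,Z_{n-1}$ with precisely the scalar found on the left. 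I expect the main obstacle to be the careful bookkeeping of this Cauchy-type specialization, matching the surviving cofactor and the cancelling prefactors to the inductive hypothesis term by term; once both sides are shown to satisfy (a)--(d), the uniqueness lemma forces them to coincide, which completes the proof.
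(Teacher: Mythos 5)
The paper does not prove Theorem~\ref{ikthv1} at all: it imports the formula as a known result, with the proof residing in the cited works of Izergin~\cite{Iz87} and Korepin~\cite{KBI93}, so there is no in-paper argument to compare against. Your outline is precisely the classical argument from those sources --- Korepin's four characterizing properties (Yang--Baxter symmetry in $X$ and $Y$, the degree bound in $q^{2x_n}$ coming from the fact that every row of a domain-wall configuration contains at least one $c$-type vertex, the freezing recursion at $x_n=y_n$ where $[x_n-y_n]=0$ kills states $\blackcirclenumber{5}$ and $\blackcirclenumber{6}$, and the base case $Z_1=-q^{y_1-x_1}$), the Lagrange-interpolation uniqueness lemma, and Izergin's verification that the determinant expression satisfies the same four properties --- and it is structured correctly, including the observation that the antisymmetry of $\det(M_{i,j})$ is cancelled by the Vandermonde-type denominators. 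The remaining work is exactly what you flag: verifying the star--triangle relation for these particular weights, and the pole/zero bookkeeping at $x_n=y_n$ showing that the surviving cofactor reproduces the same scalar $c_n$ on both sides; neither is a gap in the plan, just the computational content of the standard proof.
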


Let $u_i=q^{2x_i}$ and $v_i=q^{2y_i}$ for all $1 \leq i \leq n$. Substituting these into Theorem \ref{ikthv1}, we derive 
\begin{equation}\label{zuvv2}
        Z_n(X,Y)=\frac{(-1)^{\frac{n(n+1)}{2}}\prod_{1 \leq i,j \leq n}(u_i-v_j)(u_i-q^2v_j)
				\times 
\det{\bigl(K_{i,j}\bigr)_{1 \le i,j \le n}}
				}{(q^2-1)^{n^2-n}\prod_{i=1}^{n}(v_iu_i)^{\frac{n}{2}}v_i^{-1}\prod_{1 \leq i <j \leq n}(u_i-u_j)(v_i-v_j)},
\end{equation}
where 
$$K_{i,j}=\frac{1}{(u_i-v_j)(u_i-q^2v_j)}.$$
The above identity will be used in the subsequent proof.
%
%


\section{Lascoux's formula}\label{sec:Lascoux} 

In this section, we present Lascoux’s result on representing symmetric functions as products of two rectangular matrices. 
Let $n$ be a positive integer and let  $X = \{x_1, x_2, \dots, x_{n}\}$ and $Y = \{y_1, y_2, \dots, y_{n}\}$ be two variable sets.
Define
$$R(X,Y):=\prod_{i=1}^{n}\prod_{j=1}^{n}(x_i-y_j)$$
and 
$$
\Delta(X):=\prod_{1 \leq i<j \leq n}(x_i-x_j), \quad \Delta(Y):=\prod_{1 \leq i<j \leq n}(y_i-y_j),
$$
where $\Delta(X)$ and $\Delta(Y)$ are the Vandermonde determinants of $X$ and $Y$, respectively.
The Cauchy determinant formula is given by 
$$\det{\left(\frac{1}{x_i-y_j}\right)}_{1 \le i,j \le n}=(-1)^{\frac{n(n-1)}{2}}\frac{\Delta(X)\Delta(Y)}{R(X,Y)}.$$

Let $e_k$ and $h_k$ denote the elementary and homogeneous symmetric functions, respectively, as in \cite{Macdonald1995}. 
Their generating functions are
$$
	\sum_{k\geq 0} h_k(X) t^k = \frac{1}{\prod_{j=1}^n (1-tx_j)} \quad \text{and}\quad
	\sum_{k\geq 0} e_k(Y) t^k = \prod_{j=1}^n (1+ty_j).
$$
Using divided differences, Lascoux \cite{La99} showed that these functions admit a matrix factorization depending separately on $X$ and $Y$. We now recall this result.

\begin{theorem}[Lascoux]\label{th:lascoux} 
 Let $V = \{v_1, v_2, \dots, v_{n}\}$ and $U = \{u_1, u_2, \dots, u_{n}\}$ be two sets of variables.
Then, the symmetric function 
     $$
     F_q(V,U):=(-1)^{\frac{n(n-1)}{2}}\det{\left(\frac{1}{(v_i-u_j)(qv_i-u_j)}\right)_{1 \le i,j \le n}} \frac{\prod_{1 \leq i,j \leq n}(v_i-u_j)(qv_i-u_j)}{\Delta(V)\Delta(U)}
     $$
     is equal to the determinant of the product of matrices
   $$
     \left(h_{j-i}(V)\right)_{1\leq i \leq n, 1\leq j\leq 2n-1}
     \times
		 \left(\frac{q^{j-k+1}-q^{k-1}}{q-1}(-1)^{n-j+k-1}e_{n-j+k-1}(U)\right)_{1 \leq j \leq 2n-1, 1 \leq k \leq n}.
     $$
 \end{theorem}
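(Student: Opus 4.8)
The plan is to prove the claimed equality of two scalars by expanding both sides in the same basis of products of Schur functions $s_\lambda(V)\,s_\mu(U)$ and matching coefficients. Both sides are, after clearing denominators, polynomials that are symmetric separately in $V$ and in $U$: the Cauchy-type determinant $\det\bigl(1/((v_i-u_j)(qv_i-u_j))\bigr)$ is antisymmetric under permutations of the $v_i$ (row swaps) and of the $u_j$ (column swaps), so it is divisible by $\Delta(V)\Delta(U)$, and multiplying by the $V$- and $U$-symmetric factor $\prod_{i,j}(v_i-u_j)(qv_i-u_j)$ produces the symmetric polynomial $F_q$. Since the right-hand side is already written as the determinant of a product of a rectangular matrix in $V$ and a rectangular matrix in $U$, the natural engine is the Cauchy--Binet formula.

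First I would treat the right-hand side. Applying Cauchy--Binet to $\det\bigl(H E\bigr)$, where $H=\bigl(h_{j-i}(V)\bigr)$ is $n\times(2n-1)$ and $E$ is the $q$-weighted $(2n-1)\times n$ matrix of the statement, writes it as a sum over $n$-element subsets $S=\{j_1<\dots<j_n\}\subseteq\{1,\dots,2n-1\}$ of $\det H[\,\cdot\,,S]\cdot\det E[S,\,\cdot\,]$. The factor $\det H[\,\cdot\,,S]=\det\bigl(h_{j_m-i}(V)\bigr)$ is a Jacobi--Trudi determinant, hence---up to sign and the standard reindexing $\lambda_m=j_m+m-(n+1)$---a Schur function $s_{\lambda(S)}(V)$; likewise the $e$-part of $\det E[S,\,\cdot\,]$ is a dual (N\"agelsbach--Kostka) Jacobi--Trudi determinant giving $s_{\mu(S)}(U)$ for the conjugate partition attached to the complement of $S$. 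Thus the right-hand side becomes $\sum_S \pm\,c_S(q)\,s_{\lambda(S)}(V)\,s_{\mu(S)}(U)$, where $c_S(q)$ records what the $q$-weight $\tfrac{q^{\,j-k+1}-q^{\,k-1}}{q-1}$ contributes inside $\det E[S,\,\cdot\,]$.

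Next I would expand the left-hand kernel into a bilinear series separating $V$ from $U$. The partial-fraction/geometric expansion
\[
\frac{1}{(v-u)(qv-u)}=\sum_{m\ge 0} w_m\,u^{m}\,v^{-m-2},\qquad w_m=\frac{1-q^{-m-1}}{q-1},
\]
turns $M=\bigl(1/((v_i-u_j)(qv_i-u_j))\bigr)$ into $A\,W\,B$ with $A_{im}=v_i^{-m-2}$, diagonal weight $W=\mathrm{diag}(w_m)$, and $B_{mj}=u_j^{m}$. A second Cauchy--Binet then gives $\det M=\sum_{S}\bigl(\prod_{m\in S}w_m\bigr)\,\det\bigl(v_i^{-m-2}\bigr)_{i,m\in S}\,\det\bigl(u_j^{m}\bigr)_{j,m\in S}$. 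Multiplying by the scalar normalization $\prod_{i,j}(v_i-u_j)(qv_i-u_j)/(\Delta(V)\Delta(U))$ truncates the series to the finite range matching $\{1,\dots,2n-1\}$ and, converting the two alternants (the negative $v$-powers being fixed by a uniform shift absorbed into the numerator $R$) into Schur functions, recovers the left-hand side in exactly the same $s_{\lambda(S)}(V)\,s_{\mu(S)}(U)$ expansion---now carrying the clean product weight $\prod_{m\in S}w_m$.

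Comparing the two expansions, the Schur-function scaffolding matches term by term by construction, so the theorem reduces to the single scalar identity $c_S(q)=\pm\prod_{m\in S}w_m$ for every $S$. This is the delicate heart of the argument and the step I expect to be the main obstacle: on the left the $q$-weights appear as a plain product over the chosen indices $m\in S$ (diagonal Cauchy--Binet), whereas on the right they sit entangled inside $\det E[S,\,\cdot\,]$ through $\tfrac{q^{\,j-k+1}-q^{\,k-1}}{q-1}=q^{\,k-1}\cdot\frac{q^{\,j-2k+2}-1}{q-1}$, whose $k$-dependence couples to the column of the dual Jacobi--Trudi determinant. Proving $c_S(q)=\pm\prod_{m\in S}w_m$ therefore amounts to showing that this weight is ``unitriangular'' against the $e$-structure, i.e.\ that column operations driven by the telescoping $\frac{q^{\,j-2k+2}-1}{q-1}=\sum_{a\ge 0}q^a$ reduce the weighted determinant to a diagonal weight times the unweighted dual Jacobi--Trudi determinant. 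I expect this $q$-bookkeeping---equivalently a determinant evaluation for the weight matrix, which is precisely where Lascoux's divided-difference calculus streamlines matters---to be the only nonroutine ingredient, with the surrounding symmetric-function machinery being standard.
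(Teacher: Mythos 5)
The paper offers no proof of this theorem: it is quoted from Lascoux \cite{La99}, where it is established by divided-difference calculus, so your proposal has to stand entirely on its own. It does not, and the failure is structural rather than a matter of deferred routine details. Your treatment of the right-hand side assumes that each Cauchy--Binet minor $\det E[S,\cdot]$ of the $q$-weighted matrix $E=\bigl(\tfrac{q^{j-k+1}-q^{k-1}}{q-1}(-1)^{n-j+k-1}e_{n-j+k-1}(U)\bigr)$ factors as $\pm\,c_S(q)\,s_{\mu(S)}(U)$, i.e.\ as a $q$-scalar times the unweighted dual Jacobi--Trudi determinant. Because the weight depends on the column index $k$ as well as on $j\in S$, this is false. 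For $n=3$ and $S=\{1,3,5\}$ one computes
\begin{equation*}
\det E[S,\cdot]\;=\;q^2\bigl(-q\,e_1(U)\,e_2(U)-(q^2+q+1)\,e_3(U)\bigr)\;=\;-q^3\,s_{(2,1)}(U)\;-\;q^2(q+1)^2\,s_{(1,1,1)}(U),
\end{equation*}
which is not proportional to any single Schur function; and this $S$ genuinely contributes, since $\det H[\cdot,S]=h_1h_2-h_3=s_{(2,1)}(V)\neq 0$. Consequently the coefficients $c_S(q)$ on which your matching plan rests do not exist, and the ``single scalar identity $c_S(q)=\pm\prod_{m\in S}w_m$'' to which you reduce the theorem is not a well-posed statement.

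The left-hand side has a parallel gap. The expansion $\tfrac{1}{(v-u)(qv-u)}=\sum_{m\ge0}w_m u^m v^{-m-2}$ and the ensuing Cauchy--Binet step are correct, but the resulting sum runs over $n$-element subsets of the infinite index set $\mathbb{Z}_{\ge0}$ and produces alternants in negative powers of the $v_i$. Multiplying by $\prod_{i,j}(v_i-u_j)(qv_i-u_j)/(\Delta(V)\Delta(U))$ does not ``truncate'' this series: that prefactor mixes the two alphabets, so it convolves (re-expands by Cauchy/Pieri rules) against every term rather than selecting finitely many, and no bijection or cancellation mechanism between infinite subsets $S\subset\mathbb{Z}_{\ge0}$ and $n$-subsets of $\{1,\dots,2n-1\}$ is ever established. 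So neither side actually arrives at the common expansion $\sum_S \pm\,c_S(q)\,s_{\lambda(S)}(V)\,s_{\mu(S)}(U)$ ``by construction'', and the step you defer as $q$-bookkeeping is precisely where the entire content of the theorem lies; within the framework as you describe it, it cannot be completed term by term. A correct proof must either group Cauchy--Binet terms nontrivially or, as Lascoux does, operate on the kernel with divided differences instead of brute-force Schur expansion.
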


Applying Theorem~\ref{th:lascoux} to the identity \eqref{zuvv2} gives the following result, which is crucial for proving our main theorem.

\begin{theorem}\label{zthuv}
Given two sets of parameters $X=\{x_1,\ldots,x_n\}$ and parameters $Y=\{y_1,\ldots,y_n\}$. 
Define $U=\{q^{2x_1}, \ldots, q^{2x_n}\}$ and $V=\{q^{2y_1}, \ldots, q^{2y_n}\}$.
Then, the partition function of the square ice configuration defined in \eqref{def:Zn} is equal to
    \begin{equation*}
        Z_n(X,Y)=\frac{(-1)^{n^2}}{(q^2-1)^{n^2-n}\prod_{i=1}^{n}(v_iu_i)^{\frac{n}{2}}v_i^{-1}}\times F_{q^2}(V,U),
    \end{equation*}
    where $F_{q^2}(V,U)$ is the determinant of the product of matrices
    \begin{equation*}
        \bigl(h_{j-i}(V)\bigr)_{1\leq i \leq n, 1\leq j \leq 2n-1} 
				\times \left(\frac{q^{2j-2k+2}-q^{2k-2}}{q^2-1}
				(-1)^{n-j+k-1}
				e_{n-j+k-1}(U)\right)_{1 \leq j \leq 2n-1, 1 \leq k \leq n}.
    \end{equation*}
\end{theorem}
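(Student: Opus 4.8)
The plan is to identify the explicit partition function recorded in \eqref{zuvv2} with the symmetric-function determinant $F_{q^2}(V,U)$ of Lascoux's Theorem~\ref{th:lascoux}, and then to invoke that theorem with $q$ replaced by $q^2$ to rewrite $F_{q^2}(V,U)$ as the determinant of the product of the two rectangular matrices. The starting point is \eqref{zuvv2}, whose three nontrivial ingredients are the determinant $\det(K_{i,j})$ with $K_{i,j}=1/[(u_i-v_j)(u_i-q^2v_j)]$, the double product $\prod_{1\le i,j\le n}(u_i-v_j)(u_i-q^2v_j)$, and the Vandermonde factors $\prod_{1\le i<j\le n}(u_i-u_j)(v_i-v_j)=\Delta(U)\Delta(V)$. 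These are exactly the three building blocks appearing in the definition of $F_{q^2}(V,U)$, so the task reduces to matching them one at a time, tracking only signs and the transpose convention.

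The crucial step is a transpose-and-sign comparison of the two determinants. In the definition of $F_{q^2}(V,U)$ the matrix entry is $1/[(v_i-u_j)(q^2v_i-u_j)]$, with $v$ indexing rows and $u$ indexing columns, whereas $K$ has $u$ in rows and $v$ in columns. Transposing the Lascoux matrix (which leaves the determinant unchanged) turns its $(i,j)$ entry into $1/[(v_j-u_i)(q^2v_j-u_i)]$; since $(v_j-u_i)=-(u_i-v_j)$ and $(q^2v_j-u_i)=-(u_i-q^2v_j)$, the two factors of $-1$ cancel within the entry, so the transposed entry equals $K_{i,j}$ exactly. Hence $\det(K_{i,j})$ coincides with the Lascoux determinant with no residual sign. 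The double products agree for the same reason: each of the $n^2$ factors $v_i-u_j$ and each of the $n^2$ factors $q^2v_i-u_j$ contributes a sign $-1$ relative to $u_i-v_j$ and $u_i-q^2v_j$ after relabeling, and $(-1)^{2n^2}=1$ gives $\prod_{i,j}(v_i-u_j)(q^2v_i-u_j)=\prod_{i,j}(u_i-v_j)(u_i-q^2v_j)$. The denominator $\Delta(V)\Delta(U)$ matches $\Delta(U)\Delta(V)$ directly.

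With these three matches, substituting into the definition of $F_{q^2}(V,U)$ shows that the entire fraction in \eqref{zuvv2} apart from its scalar prefactor equals $(-1)^{n(n-1)/2}F_{q^2}(V,U)$. Combining the sign $(-1)^{n(n+1)/2}$ already carried by \eqref{zuvv2} with this $(-1)^{n(n-1)/2}$ yields $(-1)^{n(n+1)/2+n(n-1)/2}=(-1)^{n^2}$, which is precisely the sign in the stated formula, while the prefactor $1/[(q^2-1)^{n^2-n}\prod_{i=1}^n(v_iu_i)^{n/2}v_i^{-1}]$ is literally unchanged. This establishes the first displayed equality of the theorem.

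For the second displayed equality, I would apply Theorem~\ref{th:lascoux} verbatim with $q\mapsto q^2$, which expresses $F_{q^2}(V,U)$ as the determinant of the product of $\bigl(h_{j-i}(V)\bigr)_{1\le i\le n,\,1\le j\le 2n-1}$ and the matrix whose $(j,k)$ entry is $\frac{(q^2)^{j-k+1}-(q^2)^{k-1}}{q^2-1}(-1)^{n-j+k-1}e_{n-j+k-1}(U)$; writing $(q^2)^{j-k+1}=q^{2j-2k+2}$ and $(q^2)^{k-1}=q^{2k-2}$ produces the displayed entries. The only delicate point throughout is the sign bookkeeping—namely the entrywise cancellation in the transpose step and the collection of the two powers of $-1$ into $(-1)^{n^2}$; every other step is a direct substitution, so I do not expect a genuine obstacle beyond this accounting.
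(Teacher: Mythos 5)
Your proposal is correct and follows exactly the paper's route: the paper obtains Theorem~\ref{zthuv} precisely by applying Lascoux's Theorem~\ref{th:lascoux} with $q\mapsto q^2$ to the identity \eqref{zuvv2}, which is your argument, and your sign bookkeeping is accurate (the transposed Lascoux matrix equals $(K_{i,j})$ entrywise, the double products and Vandermonde factors match, and $(-1)^{n(n+1)/2}\cdot(-1)^{n(n-1)/2}=(-1)^{n^2}$). In fact, the paper states this step without detail, so your write-up supplies exactly the verification it leaves implicit.
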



\section{Proof of Theorem~\ref{th:main}}\label{sec:proof_mth} 
Let $\alpha$ and $\beta$ be two parameters. Set 
$$
\rho=-\frac{[\alpha-\frac{1}{2}]}{[\alpha+\frac{1}{2}]} \text{\quad and\quad}
\tau=-\frac{[\beta+\frac{1}{2}]}{[\beta-\frac{1}{2}]}.
$$
Let $n$ be a positive integer.  
Since the expressions we obtain are rather lengthy, we first introduce some notation to simplify the exposition.
\begin{align*}
\mathcal{Z}_n(\frac{1}{2},\alpha, \beta) 
&= Z_n(\{\frac{1}{2} +\alpha, \frac{1}{2}, \frac{1}{2}, \dots, \frac{1}{2}, \frac{1}{2} + \beta\},
\{0, 0,\dots, 0\}),\\
	\mathcal{C}_n(\alpha, \beta)&=
-q^{\alpha+\beta+\frac{n}{2}}[\frac12]^{-{(n-1)(n-2)}}\frac{[\alpha-\frac{1}{2}]}{[\alpha+\frac{1}{2}]^n}\frac{[\beta+\frac{1}{2}]}{[\beta-\frac{1}{2}]^n}.
\end{align*}
\begin{theorem}\label{th:A=CZ} 
	The doubly-refined ASMs generating function \eqref{def:An} satisfies
    \begin{equation}\label{athq}
			A_n([\frac12]^{-2},\rho,\tau)=\mathcal{C}_n(\alpha, \beta) \times \mathcal{Z}_n(\frac{1}{2},\alpha,\beta).
    \end{equation}
\end{theorem}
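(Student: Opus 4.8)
The plan is to evaluate $\mathcal{Z}_n(\frac12,\alpha,\beta)$ by pushing the partition function through the bijection $\varphi$ of Section~\ref{sec:model}, so that it becomes a weighted sum over $\ASM(n)$. The crucial feature of the specialization $X=\{\frac12+\alpha,\frac12,\dots,\frac12,\frac12+\beta\}$, $Y=\{0,\dots,0\}$ is that the spectral parameter $\xi=x_i-y_j$ equals $\frac12$ at every vertex except those in the first row (where $\xi=\frac12+\alpha$) and the last row (where $\xi=\frac12+\beta$). Since the weight $w(v)$ depends only on the local state and on $\xi$, the contribution of rows $2,\dots,n-1$ is left unchanged if we also reset $x_1$ and $x_n$ to $\frac12$. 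I would therefore compare $\mathcal{Z}_n(\frac12,\alpha,\beta)$ against the reference partition function $Z_n(\{\frac12,\dots,\frac12\},\{0,\dots,0\})$, in which every $\xi=\frac12$, and show that the two sums differ, configuration by configuration, only through their first- and last-row factors.

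First I would pin down the states that can occur in the first and last rows. Because the first row contains no $-1$ and exactly one $1$, the boundary conditions force its vertices into state \blackcirclenumber{3} strictly left of the $1$, state \blackcirclenumber{1} at the $1$, and state \blackcirclenumber{5} strictly right of it; symmetrically the last row uses \blackcirclenumber{6}, \blackcirclenumber{1}, \blackcirclenumber{4}. Reading the weights off Fig.~\ref{fig:md2} with $\xi=\frac12+\alpha$ and $\xi=\frac12+\beta$ and using $[\frac12+\alpha-1]=[\alpha-\frac12]$, the first-row weight of $A$ is $(-q^{-\frac12-\alpha})\,[\alpha-\frac12]^{f(A)-1}[\alpha+\frac12]^{n-f(A)}$ and the last-row weight is $(-q^{-\frac12-\beta})\,[\beta+\frac12]^{\ell(A)-1}[\beta-\frac12]^{n-\ell(A)}$. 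In parallel I would record that at $\xi=\frac12$ every vertex weight collapses to $\pm q^{\mp 1/2}$ or $\pm[\frac12]$, and that the relations $n_1=n+n_2$, $n_3=n_4$, $n_5=n_6$ of \eqref{ni} together with $[-\frac12]=-[\frac12]$ force the total reference weight of an ASM to depend only on $\mu(A)=n_2$, namely $(-1)^n q^{-n/2}[\frac12]^{\,n^2-n-2\mu(A)}$. This is the step where the specialization $z=[\frac12]^{-2}=2+q+q^{-1}$ enters.

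Finally, for each $A$ I would take the ratio of the genuine first- and last-row weights to their $\xi=\frac12$ counterparts $(-q^{-\frac12})(-1)^{f-1}[\frac12]^{n-1}$ and $(-q^{-\frac12})(-1)^{n-\ell}[\frac12]^{n-1}$. Substituting $\rho=-[\alpha-\frac12]/[\alpha+\frac12]$ and $\tau=-[\beta+\frac12]/[\beta-\frac12]$ converts these ratios into $\rho^{\,f(A)-1}$ and $\tau^{\,\ell(A)-1}$ multiplied by scalars independent of $A$; pulling those scalars out of the sum leaves exactly $\rho^{-1}\tau^{-1}\sum_A \rho^{f(A)}\tau^{\ell(A)}[\frac12]^{-2\mu(A)}=\rho^{-1}\tau^{-1}A_n([\frac12]^{-2},\rho,\tau)$. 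Solving for $A_n$ and simplifying the accumulated powers of $q$ and $[\frac12]$ should reproduce the constant $\mathcal{C}_n(\alpha,\beta)$: the surviving sign is $(-1)^{n+1}(-1)^n=-1$, and the $[\frac12]$-exponent $2(n-1)-(n^2-n)=-(n-1)(n-2)$ matches its definition. The main obstacle I anticipate is purely bookkeeping: correctly forcing the first- and last-row states, and then shepherding the many signs $(-1)^{f-1}$, $(-1)^{n-\ell}$, the identities $[-\frac12]=-[\frac12]$, and the separated powers of $[\frac12]$ and $q^{\pm\alpha},q^{\pm\beta}$ through the ratios without error, so that the $f$- and $\ell$-dependence cleanly exponentiates into $\rho$ and $\tau$ and the residual scalar collapses to exactly $\mathcal{C}_n(\alpha,\beta)$.
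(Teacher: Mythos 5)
Your proposal is correct and follows essentially the same route as the paper: both push $\mathcal{Z}_n(\frac12,\alpha,\beta)$ through the bijection $\varphi$, exploit the forced first- and last-row states $\blackcirclenumber{3},\blackcirclenumber{1},\blackcirclenumber{5}$ and $\blackcirclenumber{6},\blackcirclenumber{1},\blackcirclenumber{4}$ together with the counting identities \eqref{ni} and \eqref{ni2} so that the bulk weight depends only on $\mu(A)$, and then absorb the $f(A)$- and $\ell(A)$-dependence into $\rho$ and $\tau$. The only difference is organizational---you normalize each configuration's weight against the homogeneous reference $Z_n(\{\frac12,\dots,\frac12\},\{0,\dots,0\})$ and work with ratios, whereas the paper simplifies $W(\varphi(A))$ directly as in \eqref{wa1}---and your sign and $[\frac12]$-exponent bookkeeping correctly reproduces $\mathcal{C}_n(\alpha,\beta)$.
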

\begin{proof}
	Suppose $A$ is an $n\times n$ ASM. Then, the weight of the corresponding square ice configuration $\varphi(A)$ is
\begin{align}\label{wa1}
W(\varphi(A)) =&(-q^{-(\xi+\alpha)})(-q^{-(\xi+\beta)})(-q^{-\xi})^{n_1-2}(-q)^{n_2} \nonumber \\
& \times [\xi+\alpha-1]^{f(A)-1}[\xi+\alpha]^{n-f(A)}[\xi+\beta-1]^{n-\ell(A)}[\xi+\beta]^{\ell(A)-1} \nonumber \\
& \times [\xi-1]^{n_3+n_4-(f(A)-1+n-\ell(A))}[\xi]^{n_5+n_6-(\ell(A)-1+n-f(A))}. 
\end{align}
By the bijection $\varphi$, we have $\mu(A)=n_2$. Since $n_1=n+n_2$ and $\sum_{i=1}^6n_i=n^2$, we have
\begin{equation}\label{ni2}
    n_3+n_4+n_5+n_6=n^2-n-2\mu(A).
\end{equation}
Substituting $(\ref{ni})$ and $(\ref{ni2})$ into $(\ref{wa1})$, we obtain 
\begin{align*}
W(\varphi(A)) 
=& (-1)^{f(A)-\ell(A)-1}q^{-(\alpha+\beta)}[\alpha-\frac{1}{2}]^{f(A)-1}[\alpha+\frac{1}{2}]^{n-f(A)}[\beta-\frac{1}{2}]^{n-\ell(A)} \nonumber \\
& \times [\beta+\frac{1}{2}]^{\ell(A)-1}[\frac{1}{2}]^{n^2-3n-2\mu(A)+2}.
\end{align*} 
By the definition of the partition function \eqref{def:Zn}, we obtain
\begin{align*}
\mathcal{Z}_n(\frac{1}{2}, \alpha, \beta)
	&=\sum_{A \in \ASM(n)} W(\varphi(A))\\
	&=(\mathcal{C}_n(\alpha, \beta))^{-1} \times  \sum_{A \in \ASM(n)} 
( -\frac{[\alpha - \frac{1}{2}]}{[\alpha + \frac{1}{2}]})^{f(A)}
( -\frac{[\beta + \frac{1}{2}]}{[\beta - \frac{1}{2}]} )^{\ell(A)}
[ \frac{1}{2} ]^{-2\mu(A)}\\
	&= (\mathcal{C}_n(\alpha, \beta))^{-1} \times A_n( [ \frac{1}{2} ]^{-2}, \rho, \tau).
\end{align*}
This is exactly \eqref{athq}. 
\end{proof}

By Theorem~\ref{th:A=CZ}, we have to compute $\mathcal{Z}_{n}(\frac{1}{2},\alpha,\beta)$ and $\mathcal{C}_n(\alpha, \beta)$ for proving Theorem~\ref{th:main}. 
\begin{lemma}\label{lem:Zn} 
Let $n\geq 2$ be an integer. 
Then,
\begin{equation}\label{znf12}
	\mathcal{Z}_n(\frac{1}{2}, \alpha, \beta)=\frac{(-1)^{\frac{n(n-1)}{2}}q^{n(n-\alpha-\beta+\frac{1}{2})}}{(q^2-1)^{n(n-1)}}\times \det{\bigl(R_{i,j}\bigr)_{1 \le i,j \le n}},
\end{equation}
where 
$$
R_{i,j}=\sum_{k=1}^{2n-1}(-1)^k\frac{q^{k-2j+1}-q^{2j-k-3}}{q^2-1}B_n(i,j;k)
$$
with notation
$$
	B_n(i,j;k)=\binom{n+k-i-1}{k-i}\left[\binom{n-2}{k-j-1}+(q^{2\alpha}+q^{2\beta})\binom{n-2}{k-j}+ q^{2(\alpha+\beta)} \binom{n-2}{k-j+1}  \right].
$$
\end{lemma}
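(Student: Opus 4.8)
The plan is to apply Theorem~\ref{zthuv} directly to the parameters defining $\mathcal{Z}_n(\tfrac12,\alpha,\beta)$ and then to recognise the resulting determinant as $\det(R_{i,j})$ after elementary simplifications. I first specialise $x_1=\tfrac12+\alpha$, $x_2=\dots=x_{n-1}=\tfrac12$, $x_n=\tfrac12+\beta$ and $y_1=\dots=y_n=0$ (the hypothesis $n\ge 2$ guarantees that the first and last slots are genuinely separated and that the interior factor below has a nonnegative exponent). Then $V=\{q^{2y_i}\}=\{1,\dots,1\}$ and $U=\{q^{2x_i}\}=\{q^{1+2\alpha},\,q,\dots,q,\,q^{1+2\beta}\}$ with $n-2$ interior entries equal to $q$. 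Since every $v_i=1$, the scalar prefactor of Theorem~\ref{zthuv} collapses: $\prod_{i=1}^n (v_iu_i)^{n/2}v_i^{-1}=\bigl(\textstyle\prod_i u_i\bigr)^{n/2}=q^{\,n(n+2\alpha+2\beta)/2}$, so the prefactor becomes $(-1)^{n^2}(q^2-1)^{-n(n-1)}q^{-n(n+2\alpha+2\beta)/2}$, which I carry along.

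Two symmetric-function evaluations make the product matrices explicit. Because all $v_i=1$, the identity $\sum_{m\ge0}h_m(V)t^m=(1-t)^{-n}$ gives $h_m(V)=\binom{n+m-1}{m}$, so the shared-index entries of the first matrix are $h_{k-i}(V)=\binom{n+k-i-1}{k-i}$, precisely the outer binomial of $B_n(i,j;k)$. For the second matrix I expand
\[
\sum_{m\ge0}e_m(U)\,t^m=(1+t\,q^{1+2\alpha})(1+tq)^{n-2}(1+t\,q^{1+2\beta}),
\]
and read off
\[
e_m(U)=q^m\Bigl[\binom{n-2}{m}+(q^{2\alpha}+q^{2\beta})\binom{n-2}{m-1}+q^{2(\alpha+\beta)}\binom{n-2}{m-2}\Bigr].
\]

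It then remains to compute a typical entry of the product and match it with $R_{i,j}$. I write the $(i,j)$ entry with inner summation index $k$, which matches the labelling of $R_{i,j}$ at the cost of interchanging the two dummy labels appearing in Theorem~\ref{zthuv}; this entry is
\[
\sum_{k=1}^{2n-1}\binom{n+k-i-1}{k-i}\,\frac{q^{2k-2j+2}-q^{2j-2}}{q^2-1}\,(-1)^{n-k+j-1}\,e_{n-k+j-1}(U).
\]
Substituting $m=n-k+j-1$ into $e_m(U)$ and applying the symmetry $\binom{n-2}{m}=\binom{n-2}{n-2-m}$ turns the three binomials into $\binom{n-2}{k-j-1}$, $\binom{n-2}{k-j}$, $\binom{n-2}{k-j+1}$, reproducing the bracket of $B_n(i,j;k)$ verbatim. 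The remaining scalar $\frac{q^{2k-2j+2}-q^{2j-2}}{q^2-1}(-1)^{n-k+j-1}q^{\,n-k+j-1}$ factors as
\[
(-1)^{n+j-1}q^{\,n+j}\cdot(-1)^{k}\,\frac{q^{k-2j+1}-q^{2j-k-3}}{q^2-1},
\]
whose second factor is exactly the weight in $R_{i,j}$. Hence the $(i,j)$ entry of the product equals $(-1)^{n+j-1}q^{\,n+j}\,R_{i,j}$, and extracting the $n$ column scalars gives $\prod_{j=1}^n(-1)^{n+j-1}q^{\,n+j}=(-1)^{n(n+1)/2}q^{\,n^2+n(n+1)/2}$.

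Finally I assemble the constants. Multiplying this column product by the prefactor $(-1)^{n^2}(q^2-1)^{-n(n-1)}q^{-n(n+2\alpha+2\beta)/2}$ and collecting exponents gives the $q$-power $q^{\,n(n-\alpha-\beta+1/2)}$, while the signs $(-1)^{n^2+n(n+1)/2}$ reduce to $(-1)^{n(n-1)/2}$ because the exponents differ by $n(n+1)$, which is even; the factor $(q^2-1)^{n(n-1)}$ is already in place. This is exactly \eqref{znf12}. The computation is essentially bookkeeping, and the only step requiring genuine care is the index shift in the binomial symmetry: it must land the three $\binom{n-2}{\cdot}$ terms on $k-j-1,\,k-j,\,k-j+1$ rather than their reflections, and the $q$-powers and signs must be tracked consistently through the column extraction and the Izergin--Korepin prefactor.
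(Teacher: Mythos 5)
Your proof is correct and takes essentially the same approach as the paper: specialize Theorem~\ref{zthuv} to $U=\{q^{2\alpha+1},q,\dots,q,q^{2\beta+1}\}$ and $V=\{1,\dots,1\}$, extract $h_k(V)=\binom{n+k-1}{k}$ and $e_k(U)$ from their generating functions, and substitute into the determinant of the matrix product. The paper compresses the remaining bookkeeping (the relabelling of the dummy indices, the symmetry $\binom{n-2}{m}=\binom{n-2}{n-2-m}$, and the extraction of the column factors $(-1)^{n+j-1}q^{n+j}$) into a single concluding sentence, whereas you carry it out explicitly, and your computations check out.
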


\begin{proof} 
To determine the value of $\mathcal{Z}_n(\frac12, \alpha, \beta)$,
it suffices to choose appropriate special values for the parameters in Theorem \ref{zthuv}.
In this case,
$$
	U=\{q^{2\alpha+1}, q,q,\ldots, q , q^{2\beta +1} \},
	\quad
	V=\{1,1,\ldots, 1 \},
$$
so that
\begin{align*}
	\sum_{k=0}^{\infty}t^ke_k(U)&=\prod_{j=1}^{n}(1+tu_j)=(1+tq^{2\alpha+1})(1+tq^{2\beta+1})(1+tq)^{n-2},\\
	\sum_{k=0}^{\infty}t^kh_k(V)&=(\prod_{j=1}^{n}(1-tv_j))^{-1}=(1-t)^{-n}.
\end{align*}
By extracting the coefficient of $t^k$, we obtain
\begin{align*}
	e_k(U)&=\left[ \binom{n-2}{k}+(q^{2\alpha} +q^{ 2\beta})\binom{n-2}{k-1}+q^{2(\alpha+\beta)} \binom{n-2}{k-2}  \right] q^k,\\
		h_k(V)&=\binom{n+k-1}{k}.
\end{align*}
By substituting the above identities into Theorem~\ref{zthuv}, we conclude the proof.
\end{proof}

\begin{lemma}\label{lem:Cn}
	We have
\begin{align}\label{eq:Cn2}
	\mathcal{C}_n(\alpha, \beta) = [\frac12]^{-{(n-1)(n-2)}}  (-1)^{n} \tau \rho q^{1+n(\alpha+\beta) -\frac{n}{2}}((1+\rho q)(\tau +q))^{n-1}.
\end{align}
\end{lemma}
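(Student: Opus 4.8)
The plan is to eliminate all hyperbolic brackets from the definition
$$
\mathcal{C}_n(\alpha, \beta)=
-q^{\alpha+\beta+\frac{n}{2}}[\tfrac12]^{-{(n-1)(n-2)}}\frac{[\alpha-\frac{1}{2}]}{[\alpha+\frac{1}{2}]^n}\frac{[\beta+\frac{1}{2}]}{[\beta-\frac{1}{2}]^n}
$$
in favour of $\rho$, $\tau$, and explicit powers of $q$, matching the target \eqref{eq:Cn2} factor by factor. The factor $[\tfrac12]^{-(n-1)(n-2)}$ already appears unchanged in \eqref{eq:Cn2}, so the real work concerns the two bracket ratios. Using the defining relations in the form $[\alpha-\tfrac12]=-\rho\,[\alpha+\tfrac12]$ and $[\beta+\tfrac12]=-\tau\,[\beta-\tfrac12]$, I would first cancel one bracket from each ratio to obtain
$$
\frac{[\alpha-\frac12]}{[\alpha+\frac12]^n}=\frac{-\rho}{[\alpha+\frac12]^{\,n-1}},
\qquad
\frac{[\beta+\frac12]}{[\beta-\frac12]^n}=\frac{-\tau}{[\beta-\frac12]^{\,n-1}}.
$$

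The heart of the argument is a pair of auxiliary identities that convert the surviving brackets into the quantities $1+\rho q$ and $\tau+q$ of \eqref{eq:Cn2}. Writing $1+\rho q=([\alpha+\tfrac12]-q[\alpha-\tfrac12])/[\alpha+\tfrac12]$ and expanding with $[\xi]=(q^{\xi}-q^{-\xi})/(q-q^{-1})$, the numerator telescopes to $q^{-\alpha-1/2}(q^2-1)/(q-q^{-1})=q^{1/2-\alpha}$, which gives
$$
1+\rho q=\frac{q^{\frac12-\alpha}}{[\alpha+\frac12]},
\qquad\text{i.e.}\qquad
[\alpha+\tfrac12]=\frac{q^{\frac12-\alpha}}{1+\rho q}.
$$
An entirely parallel computation for $\tau+q=(q[\beta-\tfrac12]-[\beta+\tfrac12])/[\beta-\tfrac12]$ produces the numerator $-q^{1/2-\beta}$, hence $[\beta-\tfrac12]=-q^{\frac12-\beta}/(\tau+q)$; note the extra minus sign here, which is the source of the $(-1)^{n-1}$ that will combine with the leading minus sign into $(-1)^{n}$.

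Finally I would raise these two expressions to the power $n-1$ and substitute them into the simplified form of $\mathcal{C}_n$. This step assembles the factor $\rho\tau\,((1+\rho q)(\tau+q))^{n-1}$ directly, leaves the sign $-(-1)^{n-1}=(-1)^{n}$, and reduces everything else to a single power of $q$ whose exponent is
$$
\alpha+\beta+\tfrac n2-(n-1)(\tfrac12-\alpha)-(n-1)(\tfrac12-\beta)=1+n(\alpha+\beta)-\tfrac n2,
$$
exactly the exponent in \eqref{eq:Cn2}. The only genuine obstacle is the bookkeeping of the $q$-exponents and the accumulated signs: once the two auxiliary bracket identities are established, every remaining step is a purely formal simplification, and the identity holds verbatim for all $n\ge 1$ (the cases $n=1$ and general $n$ being handled uniformly, since the exponents $(n-1)(n-2)$ and $n-1$ vanish as expected when $n=1$).
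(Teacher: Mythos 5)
Your proof is correct and takes essentially the same approach as the paper: both arguments reduce to the bracket identities $[\alpha+\frac12]=q^{\frac12-\alpha}/(1+\rho q)$ and $[\beta-\frac12]=-q^{\frac12-\beta}/(\tau+q)$, followed by direct substitution into the definition of $\mathcal{C}_n(\alpha,\beta)$ with the same sign and $q$-exponent bookkeeping. The only cosmetic difference is that you obtain these identities by expanding $1+\rho q$ and $\tau+q$ directly and use $[\alpha-\frac12]=-\rho\,[\alpha+\frac12]$, $[\beta+\frac12]=-\tau\,[\beta-\frac12]$ to avoid the other two brackets, whereas the paper first inverts for $\phi=q^{2\alpha}$, $\psi=q^{2\beta}$ and writes out all four.
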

\begin{proof}
By the definitions of $[\xi]$, we have
\begin{equation*}
    \rho = -\frac{[\alpha-\frac{1}{2}]}{[\alpha+\frac{1}{2}]} 
         = \frac{\phi-q}{1-\phi q} 
    \quad \text{and} \quad 
    \tau = -\frac{[\beta+\frac{1}{2}]}{[\beta-\frac{1}{2}]}
         = \frac{1-\psi q}{\psi - q},
\end{equation*}
where $\phi=q^{2\alpha}$ and $\psi=q^{2\beta}$.  
So that
\begin{equation}\label{ppq}
    \phi=\frac{\rho+q}{1+\rho q} 
    \quad \text{and} \quad 
    \psi=\frac{1+\tau q}{\tau +q}.
\end{equation}
By applying (\ref{ppq}) to $[\alpha+\frac{1}{2}]$, $[\alpha-\frac{1}{2}]$,  $[\beta +\frac{1}{2}]$, and $[\beta -\frac{1}{2}]$, we derive the following:
\begin{align*}
	[\alpha+\frac{1}{2}]&=q^{-\alpha}\frac{\phi q^{\frac{1}{2}}-q^{-\frac{1}{2}}}{q-q^{-1}}=q^{-\alpha}\frac{ \frac{\rho+q}{1+\rho q}  q^{\frac{1}{2}}-q^{-\frac{1}{2}}}{q-q^{-1}}=\frac{q^{\frac{1}{2}-\alpha}}{1+\rho q},\\
	[\alpha-\frac{1}{2}]&=q^{-\alpha}\frac{\phi q^{-\frac{1}{2}}-q^{\frac{1}{2}}}{q-q^{-1}}=q^{-\alpha}\frac{  \frac{\rho+q}{1+\rho q} q^{-\frac{1}{2}}-q^{\frac{1}{2}}}{q-q^{-1}}=-\frac{\rho q^{\frac{1}{2}-\alpha}}{1+\rho q},\\
	[\beta-\frac{1}{2}]&=q^{-\beta}\frac{\psi q^{-\frac{1}{2}}-q^{\frac{1}{2}}}{q-q^{-1}}= q^{-\beta}\frac{ \frac{1+\tau q}{\tau +q}  q^{-\frac{1}{2}}-q^{\frac{1}{2}}}{q-q^{-1}}=-\frac{q^{\frac{1}{2}-\beta}}{\tau+q},\\
	[\beta + \frac{1}{2}]&= q^{-\beta}\frac{\psi q^{\frac{1}{2}}-q^{-\frac{1}{2}}}{q-q^{-1}}= q^{-\beta}\frac{ \frac{1+\tau q}{\tau +q}  q^{\frac{1}{2}}-q^{-\frac{1}{2}}}{q-q^{-1}}=\frac{\tau q^{\frac{1}{2}-\beta}}{\tau+q}.
\end{align*}
	Substituting the above identities into the definition of $\mathcal{C}_n(\alpha, \beta)$, we obtain \eqref{eq:Cn2}.
\end{proof}

Having established these preliminary results, we can now prove Theorem~\ref{th:main}.

\begin{proof}[Proof of Theorem~\ref{th:main}]
Let $z=[\frac{1}{2}]^{-2}=[2]+2+\frac{q^2-q^{-2}}{q-q^{-1}}+2=q+q^{-1}+2$.
From Theorem~\ref{th:A=CZ} and Lemma \ref{lem:Zn},
\begin{equation*}
	A_n(z,\rho,\tau) 
	= \mathcal{C}_n(\alpha, \beta) \times \mathcal{Z}_n(\frac{1}{2},\alpha,\beta)
	= \mathcal{D} \times \det{\bigl(R_{i,j}\bigr)_{1 \le i,j \le n}},
\end{equation*}
where
$$
\mathcal{D} = \mathcal{C}_n(\alpha, \beta) \times \frac{(-1)^{\frac{n(n-1)}{2}}q^{n(n-\alpha-\beta+\frac{1}{2})}}{(q^2-1)^{n(n-1)}}.
$$
We proceed to express $\alpha$ and $\beta$ in term of $\rho$ and $\tau$.
By Lemma \ref{lem:Cn} and the identity $(q^2-1)^2=q^2(q+q^{-1}+2)(q+q^{-1}-2)$, we have
\begin{align*}
	\mathcal{D}
	&= 
	z^{\frac{(n-1)(n-2)}{2}} { (-1)^{n} \tau \rho q^{1+n(\alpha+\beta) -\frac{n}{2}}((1+\rho q)(\tau +q))^{n-1}  }
	\times \frac{(-1)^{\frac{n(n-1)}{2}}q^{n(n-\alpha-\beta+\frac{1}{2})}}{(q^2-1)^{n(n-1)}}\\
	 &= \frac{ (-1)^{\frac{n(n+1)}{2}} \tau \rho q^{n+1}((1+\rho q)(\tau +q))^{n-1}  }{
		 (q+q^{-1}+2 )^{n-1} (q+q^{-1}-2)^{\frac{n(n-1)}{2}}},\\
	 &= \left(\frac{ (1+\rho q)(\tau +q) }{(1+q)^2} \right)^{n-1} \frac{ \tau \rho }{
		 (2-q-q^{-1})^{\frac{n(n-1)}{2}}} \times (-q^2)^n.
\end{align*}
On the other hand, 
	\begin{align*}
		\det(R_{i,j})_{1\leq i, j\leq n} &= \det(-q^{-2} M_{i,j})_{1\leq i,j\leq n},
	\end{align*}
where
\begin{align*}
    M_{i,j}= &\sum_{k=1}^{2n-1}(-1)^{k+1}\frac{q^{k-2j+2}-q^{2j-k-2}}{q-q^{-1}}\binom{n+k-i-1}{k-i}\left[\binom{n-2}{k-j-1} \right.\\
     &\left. +(\frac{\rho +q}{1+\rho q}+\frac{1+\tau q}{\tau +q})\binom{n-2}{k-j}+\frac{(\rho +q)(1+\tau q)  }{(1+\rho q)(\tau +q) } \binom{n-2}{k-j+1}  \right].
\end{align*}
Hence,
	\begin{equation}\label{eq:AnM}
	A_n(z,\rho,\tau) 
	= 
	 \left(\frac{ (1+\rho q)(\tau +q) }{(1+q)^2} \right)^{n-1} \frac{ \tau \rho }{
		 (2-q-q^{-1})^{\frac{n(n-1)}{2}}} 
	 \times \det{\bigl(M_{i,j}\bigr)_{1 \le i,j \le n}}.
\end{equation}

Using Pascal's identity of binomial coefficients
$\binom{n}{k}=\binom{n-1}{k}+\binom{n-1}{k-1}$, we apply the elementary row operations to the matrix $(M_{i,j})_{1 \le i,j \le n}$
	in an appropriate manner. The matrix $(M_{i,j})_{1\leq i, j\leq n}$ can be transformed to 
$(M'_{i,j})_{1\leq i, j\leq n}$, where
\begin{align}\label{mm}
    M'_{i,j} 
      =& \sum_{k=1}^{2n-1}(-1)^{k+1}\frac{q^{k-2j+2}-q^{2j-k-2}}{q-q^{-1}}\binom{k-1}{i-1}\left[\binom{n-2}{k-j-1} \right.\nonumber \\
      &\left. +(\frac{\rho +q}{1+\rho q}+\frac{1+\tau q}{\tau +q})\binom{n-2}{k-j}+\frac{(\rho +q)(1+\tau q)  }{(1+\rho q)(\tau +q) } \binom{n-2}{k-j+1}  \right].
\end{align}
It is straightforward to verify that 
$$\det{\bigl(M_{i,j}\bigr)_{1 \le i,j \le n}}
	=\det{\bigl(M'_{i,j}\bigr)_{1 \le i,j \le n}}.
$$

Substituting $k$ with $k+2j+2$ in (\ref{mm}) and removing all terms that are identically zero in the summation, 
we obtain the matrix $(K^{\rho,\tau}_{i,j}(q))_{1 \le i,j \le n}$ defined in Theorem~\ref{th:main}. So that
$$\det{\bigl(M'_{i,j}\bigr)_{1 \le i,j \le n}}
	=\det{\bigl(K^{\rho,\tau}_{i,j}(q)\bigr)_{1 \le i,j \le n}}.
$$
Identity \eqref{eq:AnM} is exactly \eqref{eq:An:main}.
This completes the proof. 
\end{proof}


\section{Applications and Corollaries}\label{sec:applications} 
Our main theorem yields several interesting corollaries concerning matrix determinants. 
Setting $\tau=1$ in Theorem~\ref{th:main} yields the following corollary. 

\begin{corollary}\label{cor:tau=1}
    The generating function of the $n\times n$ ASMs for the number of $-1$ and the position of $1$ in the first row is 
    \begin{equation*}
			A_n(2+q+q^{-1},\rho, 1)=\sum_{A\in \ASM(n)}(2+q+q^{-1})^{\mu(A)}\rho^{f(A)}=\frac{\rho\cdot \det{\bigl(K^{\rho,1}_{i,j}(q)\bigr)_{1 \le i,j \le n}}}{(1+\rho q)(1+q)^{n-1}(2-q-q^{-1})^{\frac{n(n-1)}{2}}},
    \end{equation*}
     where 
$$
K^{\rho,1}_{i,j}(q)
	= \sum_{k=-n+1}^n(-1)^{k+1}\frac{q^k-q^{-k}}{q-q^{-1}}{2j+k-3 \choose i-1} \left[(1+\rho q){n-1 \choose j+k-2}
	+ (\rho +q){n-1 \choose j+k-1}\right].
$$
\end{corollary}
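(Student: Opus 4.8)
The plan is to derive the corollary as the specialization $\tau=1$ of Theorem~\ref{th:main}, the only real work being a rewriting of the kernel through Pascal's identity to pass from the binomial coefficients $\binom{n-2}{\,\cdot\,}$ of the theorem to the coefficients $\binom{n-1}{\,\cdot\,}$ appearing in the statement. To keep the two kernels apart, I write $\widetilde{K}_{i,j}$ for the entry $K^{\rho,\tau}_{i,j}(q)$ of Theorem~\ref{th:main} evaluated at $\tau=1$, and reserve $K^{\rho,1}_{i,j}(q)$ for the kernel displayed in the corollary.

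First I would set $\tau=1$ in the prefactor of \eqref{eq:An:main}. Since $\tau+q=1+q$ and $\tau\rho=\rho$, the scalar in front collapses to $\left(\tfrac{1+\rho q}{1+q}\right)^{n-1}\tfrac{\rho}{(2-q-q^{-1})^{n(n-1)/2}}$, which already gives the correct power of $(1+q)$ in the denominator. In $\widetilde K_{i,j}$ one has $\tfrac{1+\tau q}{\tau+q}=1$ together with $\tfrac{(\rho+q)(1+\tau q)}{(1+\rho q)(\tau+q)}=\tfrac{\rho+q}{1+\rho q}$, so that the bracket of $\widetilde K_{i,j}$ reads $\binom{n-2}{k+j-3}+\tfrac{(1+\rho)(1+q)}{1+\rho q}\binom{n-2}{k+j-2}+\tfrac{\rho+q}{1+\rho q}\binom{n-2}{k+j-1}$, where I used $\tfrac{\rho+q}{1+\rho q}+1=\tfrac{(1+\rho)(1+q)}{1+\rho q}$.

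The key step is the entrywise claim $K^{\rho,1}_{i,j}(q)=(1+\rho q)\,\widetilde K_{i,j}$. I would apply Pascal's identity $\binom{n-1}{m}=\binom{n-2}{m}+\binom{n-2}{m-1}$ to both $\binom{n-1}{j+k-2}$ and $\binom{n-1}{j+k-1}$ inside the bracket of $K^{\rho,1}_{i,j}(q)$, then regroup by the three coefficients $\binom{n-2}{j+k-3},\binom{n-2}{j+k-2},\binom{n-2}{j+k-1}$. The coefficients of the two outer terms are $(1+\rho q)$ and $(\rho+q)$ respectively, while the middle coefficient is $(1+\rho q)+(\rho+q)=(1+\rho)(1+q)$. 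Comparing with the bracket of $\widetilde K_{i,j}$ displayed above shows that the corollary's bracket is exactly $(1+\rho q)$ times that of $\widetilde K_{i,j}$; since the common prefactor $(-1)^{k+1}\tfrac{q^k-q^{-k}}{q-q^{-1}}\binom{2j+k-3}{i-1}$ is identical in both kernels, the claim follows.

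Finally, since $(1+\rho q)$ is a single scalar multiplying every entry of the $n\times n$ matrix, the determinant satisfies $\det\bigl(K^{\rho,1}_{i,j}(q)\bigr)=(1+\rho q)^n\det\bigl(\widetilde K_{i,j}\bigr)$. Substituting $\det(\widetilde K_{i,j})=(1+\rho q)^{-n}\det\bigl(K^{\rho,1}_{i,j}(q)\bigr)$ into the specialized formula leaves the net power $(1+\rho q)^{(n-1)-n}=(1+\rho q)^{-1}$, which reproduces the denominator $(1+\rho q)(1+q)^{n-1}(2-q-q^{-1})^{n(n-1)/2}$ of the corollary. I do not expect any genuine obstacle: the whole argument is a specialization plus a one-line application of Pascal's identity, and the only care required is the bookkeeping of the powers of $(1+\rho q)$ together with the elementary identity $(1+\rho q)+(\rho+q)=(1+\rho)(1+q)$.
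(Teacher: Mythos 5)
Your proposal is correct and follows exactly the route the paper intends: the paper states this corollary as an immediate specialization of Theorem~\ref{th:main} at $\tau=1$, and your argument simply supplies the bookkeeping the paper leaves implicit, namely factoring $(1+\rho q)$ out of each kernel entry via Pascal's identity and the identity $(1+\rho q)+(\rho+q)=(1+\rho)(1+q)$, so that $\det\bigl(K^{\rho,1}_{i,j}(q)\bigr)=(1+\rho q)^{n}\det\bigl(\widetilde K_{i,j}\bigr)$ absorbs the prefactor $(1+\rho q)^{n-1}$ into the stated denominator. All specializations and the power count of $(1+\rho q)$ check out, so nothing is missing.
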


\begin{corollary}\label{cor:JRL}
	For a positive integer $n$, we have
\begin{equation*}
	\det\bigl( J_{i,j}(q) \bigr)_{1 \le i,j \le n+1} 
	= q^{n+2} \det\bigl(  J_{i,j}(q^{-1} )\bigr)_{1 \le i,j \le n+1} 
	= q (1-q)^n(q-q^{-1})^n \det\bigl( L_{i,j}(q) \bigr)_{1 \le i,j \le n},
\end{equation*}
where
\begin{align*}
	J_{i,j}(q) &= \sum_{k=-n}^{n+1} (-1)^{k+1} \frac{q^k - q^{-k}}{q - q^{-1}} 
\binom{2j + k - 3}{i - 1} 
\left[ q \binom{n}{j + k - 2} + \binom{n}{j + k - 1} \right], \\
	L_{i,j}(q) &= \sum_{k=-n+1}^{n} (-1)^{k+1} \frac{q^k - q^{-k}}{q - q^{-1}} 
\binom{2j + k - 3}{i - 1} 
\binom{n}{j + k - 1}.
\end{align*}
\end{corollary}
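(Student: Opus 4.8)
The plan is to recognize both $\det(J)$ and $\det(L)$ as specializations of the matrix $K^{\rho,\tau}$ of Theorem~\ref{th:main}, thereby converting the statement into an identity about the $(2+q+q^{-1})$-enumeration of ASMs, and then to deduce the first equality from the second by a $q\mapsto q^{-1}$ symmetry. Write $z=2+q+q^{-1}$ throughout; all identities are understood as identities of rational functions in $q$. The second equality is the substantive one. Indeed, once it is known, the first follows formally: replacing $q$ by $q^{-1}$ leaves each entry $L_{i,j}$ unchanged, since $\tfrac{q^k-q^{-k}}{q-q^{-1}}$ is invariant under $q\mapsto q^{-1}$ and no other occurrence of $q$ appears in $L_{i,j}(q)$, so $\det(L(q^{-1}))=\det(L(q))$. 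Applying the second equality at $q$ and at $q^{-1}$ and dividing, the factor $q(1-q)^n(q-q^{-1})^n$ becomes $q^{-1}(1-q^{-1})^n(q^{-1}-q)^n$; a short simplification using $1-q^{-1}=-(1-q)/q$ yields the quotient $q^{n+2}$, which is exactly the first claimed equality.

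For the second equality I would first match brackets. Writing the bracket of $K^{\rho,\tau}_{i,j}$ as the coefficient of $x^{k+j-1}$ in $(\phi+x)(\psi+x)(1+x)^{m-2}$ (for the order-$m$ matrix), with $\phi,\psi$ as in \eqref{ppq}, one checks that at $\rho=\tau=1$ one has $\phi=\psi=1$ and the bracket collapses to $\binom{n}{j+k-1}$ by Pascal's rule; hence $L_{i,j}(q)=K^{1,1}_{i,j}(q)$ for the order-$n$ matrix, and Theorem~\ref{th:main} at $\rho=\tau=1$ gives directly $\det(L)=(2-q-q^{-1})^{n(n-1)/2}A_n(z,1,1)$. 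Expanding $\binom{n}{m}=\binom{n-1}{m}+\binom{n-1}{m-1}$ in the bracket of $J$ shows analogously that $J_{i,j}(q)=q\,K^{1,0}_{i,j}(q)$, where $K^{1,0}$ is the order-$(n+1)$ matrix of Theorem~\ref{th:main} with $\rho=1$, $\tau=0$ (so that $\phi=1$, $\psi=q^{-1}$). Thus $\det(J)=q^{\,n+1}\det(K^{1,0})$.

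The delicate point is that $\tau=0$ is degenerate: since $\ell(A)\ge 1$ for every ASM, $A_{n+1}(z,1,0)=0$, so one cannot substitute $\tau=0$ into Theorem~\ref{th:main}. Instead I would extract the coefficient of $\tau^{1}$. The entries $K^{1,\tau}_{i,j}$ are analytic at $\tau=0$, so comparing the linear-in-$\tau$ terms on both sides of Theorem~\ref{th:main} (order $n+1$, $\rho=1$) gives $\det(K^{1,0})=\bigl(\tfrac{1+q}{q}\bigr)^{n}(2-q-q^{-1})^{n(n+1)/2}\,B$, where $B=\sum_{A\in\ASM(n+1),\,\ell(A)=1}z^{\mu(A)}$ enumerates the order-$(n+1)$ ASMs whose last-row $1$ lies in the first column. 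The remaining combinatorial input is $B=A_n(z,1,1)$: if $\ell(A)=1$, then by the boundary fact recalled in the introduction column $1$ of $A$ contains a single $1$ and no $-1$, which is forced to be $A_{n+1,1}=1$; deleting the last row and the first column gives a $\mu$-preserving bijection onto $\ASM(n)$. Substituting $B=A_n(z,1,1)=\det(L)/(2-q-q^{-1})^{n(n-1)/2}$ and using the elementary identity $(1+q)(2-q-q^{-1})=(1-q)(q-q^{-1})$ collapses every prefactor into $q(1-q)^n(q-q^{-1})^n$, giving $\det(J)=q(1-q)^n(q-q^{-1})^n\det(L)$.

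I expect the main obstacle to be the handling of the $\tau=0$ degeneration: one must justify passing from the exact identity of Theorem~\ref{th:main} to a comparison of $\tau$-linear coefficients, and correctly identify $B$ as the generating function over ASMs with the last-row $1$ in the first column. The rest—the two bracket matchings, the Pascal collapse, the deletion bijection, and the final algebraic simplification—is routine. As a fallback that avoids the limit entirely, one could instead reduce the $(n+1)\times(n+1)$ determinant of $J$ to the $n\times n$ determinant of $L$ by explicit row and column operations built from Pascal's identity; this is purely algebraic but considerably more bookkeeping, so I would use the ASM route as the primary argument.
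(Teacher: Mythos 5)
Your proposal is correct and follows essentially the same route as the paper: both identify $\det(L)$ with the $\rho=\tau=1$ specialization of Theorem~\ref{th:main} (order $n$), identify $\det(J)$ with a linear-coefficient extraction of the order-$(n+1)$ generating function via the corner-deletion bijection on ASMs, and finish using the invariance of $L_{i,j}$ under $q\mapsto q^{-1}$. The only difference is a mirror image: the paper extracts the coefficient of $\rho^{1}$ at $\tau=1$ (through Corollary~\ref{cor:tau=1}), which produces the identity for $\det\bigl(J_{i,j}(q^{-1})\bigr)$ first, whereas you extract the coefficient of $\tau^{1}$ at $\rho=1$ and reach $\det\bigl(J_{i,j}(q)\bigr)=q(1-q)^n(q-q^{-1})^n\det\bigl(L_{i,j}(q)\bigr)$ directly.
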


\begin{proof} By the definition of $A_{n}(2+q+q^{-1},\rho,1)$, we see that the coefficient of $\rho^1$ 
	in $A_{n+1}(2+q+q^{-1},\rho,1)$ is just $A_{n}(2+q+q^{-1},1,1)$. 
	Selecting the coefficients of $\rho^1$ in  Corollary \ref{cor:tau=1} with $n:=n+1$, we have
\begin{equation}\label{eqco1}
	A_{n}(2+q+q^{-1},1,1) = \frac{\det\bigl(  K^{0,1}_{i,j}(q) \bigr)_{1 \le i,j \le n+1}}{(1+q)^n(2-q-q^{-1})^{\frac{n(n+1)}{2}}}
	 = \frac{\det\bigl( q J_{i,j}(q^{-1}) \bigr)_{1 \le i,j \le n+1}}{(1+q)^n(2-q-q^{-1})^{\frac{n(n+1)}{2}}}. 
\end{equation}
Moreover, when $\rho=1$ in Corollary~\ref{cor:tau=1} with $n:=n$, we have
 \begin{equation}\label{eqco2}
    A_{n}(2+q+q^{-1},1,1)
	 = \frac{ \det\bigl( K^{1,1}_{i,j}(q) \bigr)_{1 \le i,j \le n}}{(1+q)^n(2-q-q^{-1})^{\frac{n(n-1)}{2}}}
	 = \frac{\det\bigl( L_{i,j}(q) \bigr)_{1 \le i,j \le n}}{(2-q-q^{-1})^{\frac{n(n-1)}{2}}}.
 \end{equation}
	Combining (\ref{eqco1}) and (\ref{eqco2}), we have the second identity in Corollary \ref{cor:JRL}. 
	Substituting $q$ by $q^{-1}$, we derive the first identity in Corollary \ref{cor:JRL}. 
\end{proof}

From the first identity in Corollary~\ref{cor:JRL}, we derive the following result.
\begin{corollary}\label{cor:sym}
	The coefficients of $q^{-1-\frac{n}{2}}\det{\left(J_{i,j}(q)\right)_{1 \le i,j \le n}}$ are symmetric. 
\end{corollary}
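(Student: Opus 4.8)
The plan is to read the symmetry off directly from the functional equation supplied by the first identity of Corollary~\ref{cor:JRL}, with essentially no additional work. Write $D(q) := \det\bigl(J_{i,j}(q)\bigr)$ for the determinant appearing in that identity. The first identity asserts the reciprocity $D(q) = q^{\,n+2}\,D(q^{-1})$, and this single relation already encodes the claimed palindromy once it is correctly interpreted; the symmetrizing factor $q^{-1-n/2}$ is nothing but $q^{-(n+2)/2}$, i.e. precisely half the exponent occurring in that reciprocity.

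First I would confirm that $D(q)$ is genuinely a Laurent polynomial in $q$, so that a statement about its coefficients is well posed. Each entry $J_{i,j}(q)$ is a $\mathbb{Z}$-linear combination of terms $\frac{q^k - q^{-k}}{q - q^{-1}}$, and for every integer $k$ this quantity is the balanced integer $q^{\,k-1} + q^{\,k-3} + \cdots + q^{-(k-1)}$ (and $0$ when $k=0$, and its negative when $k<0$), hence a genuine element of $\mathbb{Z}[q,q^{-1}]$; the binomial factors are constants. Therefore every $J_{i,j}(q)\in\mathbb{Z}[q,q^{-1}]$, and $D(q)$, being a polynomial expression in the entries, also lies in $\mathbb{Z}[q,q^{-1}]$.

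Next I would introduce the centered function $g(q) := q^{-1-n/2}\,D(q) = q^{-(n+2)/2}\,D(q)$ and substitute $q \mapsto q^{-1}$ into the reciprocity. A one-line computation gives
\[
g(q^{-1}) = q^{(n+2)/2}\,D(q^{-1}) = q^{(n+2)/2}\,q^{-(n+2)}\,D(q) = q^{-(n+2)/2}\,D(q) = g(q),
\]
so $g$ is invariant under $q \mapsto q^{-1}$. Writing $D(q) = \sum_m c_m q^{m}$, this invariance is equivalent to the coefficient relation $c_m = c_{\,n+2-m}$ for every $m$; that is, the coefficient sequence of $D(q)$, equivalently of $g(q)$, is palindromic about its center. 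This is exactly the asserted symmetry.

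There is no serious obstacle: the result is a purely formal consequence of the reciprocity $D(q) = q^{n+2}D(q^{-1})$ already proved in Corollary~\ref{cor:JRL}. The only two points deserving a word of care are (i) checking that $D(q)\in\mathbb{Z}[q,q^{-1}]$, which is what makes the phrase ``coefficients are symmetric'' meaningful, and (ii) noting that when $n$ is odd the centering exponent $(n+2)/2$ is a half-integer, so $g$ is a Laurent polynomial in $q^{1/2}$; in that case ``symmetric coefficients'' must be read as symmetry of the coefficient sequence about its (half-integer) center rather than about an integer power of $q$. Both are immediate once stated, so the proof is effectively complete after the displayed computation.
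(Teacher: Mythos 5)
Your proof is correct and is exactly the argument the paper intends: the symmetry is read off from the reciprocity $D(q)=q^{n+2}D(q^{-1})$ in the first identity of Corollary~\ref{cor:JRL} by centering with the factor $q^{-(n+2)/2}=q^{-1-\frac{n}{2}}$, which is all the paper means when it says the corollary is derived from that identity. Your two side remarks --- that $D(q)$ is a genuine Laurent polynomial, and that for odd $n$ the center is a half-integer power (matching the paper's $n=1$ example) --- are sound, and your reading of the determinant as the $(n+1)\times(n+1)$ one from Corollary~\ref{cor:JRL} correctly resolves the paper's notational slip of writing size $n$ in the corollary while its examples use size $n+1$.
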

For example, if $n=1$,
$$
q^{-\frac{3}{2}} \det(J_{i,j}(q))_{1\leq i, j\leq 2} = 
-q^{\frac32} + q^{\frac12}
+ q^{-\frac12} -q^{-\frac32};
$$
and if $n=2$,
$$
q^{-2}\det{\left(J_{i,j}(q)\right)_{1 \le i,j \le 3}}= -2q^4+8q^3-8q^2-8q+20-8q^{-1}-8q^{-2}+8q^{-3}-2q^{-4}.
$$
We wonder whether there exists a direct proof of Corollary \ref{cor:sym} using the definitions of $J_{i,j}(q)$.

\medskip

Throughout this paper, we represent the imaginary unit by $I$, where $I=\sqrt{-1}$.
Setting $q=\omega_{-}=({\sqrt{3}I - 1})/{2}$, i.e.,  $2+q+q^{-1}=1$, in 
Identity~\eqref{eqco2}, we get
\begin{equation}\label{eq:A111}
  A_{n}(1,1,1)
	= \frac{\det\bigl( L_{i,j}(\omega_{-}) \bigr)_{1 \le i,j \le n}}{3^{\frac{n(n-1)}{2}}}.
\end{equation}
Let
\begin{equation*}
	\delta_{-}(k) =  (-1)^{k+1}\frac{\omega_{-}^k- \omega_{-}^{-k}}{\omega_{-}- \omega_{-}^{-1}}. 
\end{equation*}
We verify that $\delta_{-}(k+6m) = \delta_{-}(k)$ and
$$
\delta_{-}(k) = 
\begin{cases}
0, & \text{if } k \equiv 0, 3 \pmod{6}; \\
1, & \text{if } k \equiv 1, 2 \pmod{6}; \\
-1, & \text{if } k \equiv 4, 5 \pmod{6}.
\end{cases}
$$
Since $A_n(1,1,1)$ enumerates the $n\times n$ ASMs, it follows from (\ref{asm}) that
\begin{equation}\label{eqc115}
    A_n(1,1,1)=\prod_{k=0}^{n-1}\frac{(3k+1)!}{(n+k)!}.
\end{equation}
Combining Identities \eqref{eq:A111}  and \eqref{eqc115}, we obtain 

\begin{corollary} We have
    \begin{equation}\label{eqc111}
        \det{\bigl(T_{i,j}\bigr)_{1 \le i,j \le n}}=3^{\frac{n(n-1)}{2}}\prod_{k=0}^{n-1}\frac{(3k+1)!}{(n+k)!},
    \end{equation}
 where
    $$
    T_{i,j}=\sum_{k=-n+1}^{n}\delta_{-}(k)\cdot\binom{2j+k-3}{i-1}\binom{n}{j+k-1}.$$
\end{corollary}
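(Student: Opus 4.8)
The plan is to observe that the matrix $(T_{i,j})_{1\le i,j\le n}$ is exactly the specialization of $(L_{i,j}(q))_{1\le i,j\le n}$ at $q=\omega_{-}$, and then to chain together the two identities already in hand, namely \eqref{eq:A111} and \eqref{eqc115}. This makes the corollary a direct arithmetic consequence of facts proved earlier, rather than an independent computation.

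First I would match the two defining formulas entry by entry. In the sum defining $L_{i,j}(q)$, the coefficient attached to $\binom{2j+k-3}{i-1}\binom{n}{j+k-1}$ is $(-1)^{k+1}(q^{k}-q^{-k})/(q-q^{-1})$, which upon setting $q=\omega_{-}$ becomes precisely $\delta_{-}(k)$. Hence $T_{i,j}=L_{i,j}(\omega_{-})$ for every pair $(i,j)$, and therefore
$$\det\bigl(T_{i,j}\bigr)_{1\le i,j\le n}=\det\bigl(L_{i,j}(\omega_{-})\bigr)_{1\le i,j\le n}.$$
Here one should note that $\omega_{-}=(\sqrt{3}I-1)/2$ is a primitive cube root of unity, so that $q+q^{-1}=-1$ and consequently $2-q-q^{-1}=3$; this is exactly what makes $3^{n(n-1)/2}$ the correct normalizing factor carried over from Identity \eqref{eqco2}.

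Next I would invoke \eqref{eq:A111}, which reads $A_{n}(1,1,1)=\det(L_{i,j}(\omega_{-}))_{1\le i,j\le n}/3^{n(n-1)/2}$, and rearrange it into $\det(L_{i,j}(\omega_{-}))_{1\le i,j\le n}=3^{n(n-1)/2}\,A_{n}(1,1,1)$. Finally, since $A_{n}(1,1,1)$ enumerates all $n\times n$ ASMs, Identity \eqref{eqc115} (equivalently the ASM enumeration formula \eqref{asm}) evaluates it as $\prod_{k=0}^{n-1}(3k+1)!/(n+k)!$. Substituting this into the previous display yields the claimed value of $\det(T_{i,j})_{1\le i,j\le n}$.

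There is no serious obstacle in this argument: the corollary follows by a one-line chain of equalities from \eqref{eq:A111} and \eqref{eqc115}. The only point deserving a moment's attention is the first step, the verification that the special value $q=\omega_{-}$ turns the Laurent-polynomial coefficient appearing in $L_{i,j}(q)$ into the periodic sequence $\delta_{-}(k)$; this is a routine check against the explicit values of $\delta_{-}(k)$ recorded just before the corollary, together with the identity $2-q-q^{-1}=3$ noted above.
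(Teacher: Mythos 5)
Your proposal is correct and matches the paper's own argument exactly: the paper also obtains the corollary by specializing $q=\omega_{-}$ in Identity \eqref{eqco2} (so that $T_{i,j}=L_{i,j}(\omega_{-})$ and $2-q-q^{-1}=3$), yielding \eqref{eq:A111}, and then combining it with the ASM enumeration \eqref{eqc115}. Your extra verification that $\omega_{-}$ is a primitive cube root of unity, making $\delta_{-}(k)$ the periodic specialization of the coefficient, is precisely the routine check the paper records just before stating the corollary.
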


Setting $q=I$, i.e., $2+q+q^{-1}= 2$ ,  in Identity~\eqref{eqco2} yields 
\begin{equation}\label{eq:A211}
  A_{n}(2,1,1)
	= \frac{\det\bigl( L_{i,j}(I) \bigr)_{1 \le i,j \le n}}{2^{\frac{n(n-1)}{2}}}.
\end{equation}
Since
$$
(-1)^{k+1} \frac{I^{k}-I^{-k}}{I-I^{-1}}  =
\begin{cases}
	0,      & \text{if $k=2m$ even;} \\
	(-1)^m, & \text{if $k=2m+1$ odd,}
\end{cases}
$$
we have
$$
L_{i,j}(I)=\sum_{k=\lceil  -\frac{n}{2}\rceil }^{ \lfloor \frac{n-1}{2} \rfloor}(-1)^{k}\binom{2j+2k-2}{i-1}\binom{n}{j+2k}.
$$
Moreover, it follows from \cite{MRR2} that the $2$-enumeration of ASMs is given by 
\begin{equation}\label{eqc126}
    A_n(2,1,1)=2^{\frac{n(n-1)}{2}}.
\end{equation} 
Combining Identities \eqref{eq:A211}  and \eqref{eqc126}, we obtain 

\begin{corollary}\label{co12}
	We have
$	\det{(L_{i,j}(I))_{1 \le i,j \le n}}=2^{n(n-1)}$.
\end{corollary}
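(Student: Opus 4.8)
The plan is to obtain this determinantal identity directly by combining the two facts already assembled in the excerpt, namely the specialization \eqref{eq:A211} at $q=I$ and the classical $2$-enumeration formula \eqref{eqc126}. No new manipulation of the entries $L_{i,j}(I)$ is required, so I expect the argument to be very short.

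First I would rearrange \eqref{eq:A211} to isolate the determinant, obtaining
$$\det\bigl(L_{i,j}(I)\bigr)_{1\le i,j\le n} = 2^{\frac{n(n-1)}{2}}\,A_n(2,1,1).$$
Next I would invoke \eqref{eqc126}, the result of Mills, Robbins, and Rumsey that the $2$-enumeration of $n\times n$ ASMs equals $A_n(2,1,1)=2^{\frac{n(n-1)}{2}}$. Substituting this value gives
$$\det\bigl(L_{i,j}(I)\bigr)_{1\le i,j\le n} = 2^{\frac{n(n-1)}{2}}\cdot 2^{\frac{n(n-1)}{2}} = 2^{n(n-1)},$$
which is exactly the claim. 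The entire proof thus reduces to reading off the two identities and performing the exponent bookkeeping $\tfrac{n(n-1)}{2}+\tfrac{n(n-1)}{2}=n(n-1)$.

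The only genuine input here is the $2$-enumeration formula, which is a deep classical fact but is taken as given in \eqref{eqc126}; granting it, there is no real obstacle. If instead one wanted to avoid citing \eqref{eqc126}, the hard part would become a direct evaluation of $\det\bigl(L_{i,j}(I)\bigr)$ — for instance by row and column reduction exploiting the two-periodicity $(-1)^{k+1}(I^k-I^{-k})/(I-I^{-1})\in\{0,\pm1\}$ that already collapsed $L_{i,j}(I)$ to the sum over $k$ with $\binom{2j+2k-2}{i-1}\binom{n}{j+2k}$ — but this is precisely the computation that the ASM interpretation allows us to bypass.
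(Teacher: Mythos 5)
Your proof is correct and is exactly the paper's argument: the paper also obtains Corollary~\ref{co12} by combining the specialization \eqref{eq:A211} with the classical $2$-enumeration \eqref{eqc126} and multiplying the two factors of $2^{\frac{n(n-1)}{2}}$. Nothing further is needed.
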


Setting  $q=\omega_{+}=({\sqrt{3}I + 1})/{2}$, i.e.,  $2+q+q^{-1}=3$, in 
Identity~\eqref{eqco2}, we get
\begin{equation}\label{eq:A311}
  A_{n}(3,1,1)
	= {\det\bigl( L_{i,j}(\omega_{+}) \bigr)_{1 \le i,j \le n}}.
\end{equation}
Let
\begin{equation*}
	\delta_{+}(k) =  (-1)^{k+1}\frac{\omega_{+}^k- \omega_{+}^{-k}}{\omega_{+}- \omega_{+}^{-1}}. 
\end{equation*}
We verify that $\delta_{+}(k+3m) = \delta_{+}(k)$ and
$$
\delta_{+}(k) = 
\begin{cases}
0, & \text{if } k \equiv 0 \pmod{3}; \\
1, & \text{if } k \equiv 1 \pmod{3}; \\
-1, & \text{if } k \equiv 2 \pmod{3}.
\end{cases}
$$

Furthermore, from \cite{Ku96} and \cite{MRR2},  the $3$-enumeration of ASMs is given by 
\begin{equation}\label{eqao3}
    A_{2m+1}(3,1,1)=3^{m(m+1)}\prod_{k=0}^{m-1}\frac{(3k+2)!^2}{(m+k+1)!^2},
\end{equation}
\begin{equation}\label{eqae3}
    A_{2m}(3,1,1)=3^{m^2-1}\frac{(m-1)!}{(3m-1)!}\prod_{k=0}^{m-1}\frac{(3k+2)!^2}{(m+k)!^2},
\end{equation}
for $m \geq 0$. Combining \eqref{eq:A311}, \eqref{eqao3}, and \eqref{eqae3} yields

\begin{corollary} 
We have
\begin{equation*}
\det{\bigl(T_{i,j}\bigr)_{1 \le i,j \le n}}
=
\begin{cases}
\displaystyle{3^{m(m+1)}\prod_{k=0}^{m-1}\frac{(3k+2)!^2}{(m+k+1)!^2}}, &\text{if $n=2m+1$}; \\[15pt]
\displaystyle{ 3^{m^2-1} \frac{(m-1)!}{(3m-1)!}\prod_{k=0}^{m-1}\frac{(3k+2)!^2}{(m+k)!^2}}, &\text{if $n=2m$},
\end{cases}
\end{equation*}
where
$$
T_{i,j}=\sum_{k=-n}^{n}\delta_{+}(k)\binom{2j+k-3}{i-1}\binom{n}{j+k-1}.
$$

\end{corollary}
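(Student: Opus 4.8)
The plan is to reduce the statement to the enumeration identities already established for the $3$-enumeration $A_n(3,1,1)$, by first recognizing that the matrix $(T_{i,j})$ is exactly $(L_{i,j}(\omega_+))$. Unwinding the definition of $\delta_+(k)$ gives
$$
L_{i,j}(\omega_+) = \sum_{k=-n+1}^{n} \delta_+(k)\binom{2j+k-3}{i-1}\binom{n}{j+k-1},
$$
which differs from $T_{i,j}$ only in the lower summation limit ($k=-n+1$ versus $k=-n$). The first step is therefore to verify that the extra term at $k=-n$ contributes nothing: for $1 \le j \le n$ one has $j-n-1 \le -1$, so the factor $\binom{n}{j+k-1}=\binom{n}{j-n-1}$ vanishes regardless of the value of $\binom{2j+k-3}{i-1}$ or of $\delta_+(-n)$. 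This yields $T_{i,j} = L_{i,j}(\omega_+)$ for all $i,j$, and hence $\det\bigl(T_{i,j}\bigr)_{1\le i,j\le n} = \det\bigl(L_{i,j}(\omega_+)\bigr)_{1\le i,j\le n}$.

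Next I would invoke Identity~\eqref{eq:A311}, which identifies $\det(L_{i,j}(\omega_+))$ with $A_n(3,1,1)$, the $3$-enumeration obtained by setting $q=\omega_+$ (so that $2+q+q^{-1}=3$) in Corollary~\ref{cor:tau=1} with $\rho=1$. The remaining step splits into the two parity cases. For $n=2m+1$, substituting the known value~\eqref{eqao3} of $A_{2m+1}(3,1,1)$ produces the odd-case product formula; for $n=2m$, substituting~\eqref{eqae3} produces the even-case formula. Combining these establishes the claimed closed form for $\det(T_{i,j})$, exactly mirroring the structure of the preceding corollary built on $\delta_-$ and the standard enumeration~\eqref{asm}.

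The argument carries essentially no computational difficulty, since all the analytic and combinatorial content has been front-loaded into Theorem~\ref{th:main}, Corollary~\ref{cor:tau=1}, and the classical $3$-enumeration formulas of Kuperberg and Mills--Robbins--Rumsey. The only point requiring genuine attention is the bookkeeping check that the widened summation range for $T_{i,j}$ does not alter its value; this is the sole place where one must confirm that the boundary term at $k=-n$ truly vanishes rather than being silently absorbed into the period-$3$ behavior of $\delta_+$. Once this is settled, the corollary follows immediately from the cited identities.
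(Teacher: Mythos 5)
Your proof is correct and takes essentially the same route as the paper: identify $\det\bigl(T_{i,j}\bigr)$ with $\det\bigl(L_{i,j}(\omega_{+})\bigr) = A_n(3,1,1)$ via \eqref{eq:A311} (i.e., \eqref{eqco2} at $q=\omega_{+}$, where $2-q-q^{-1}=1$), then substitute the classical $3$-enumeration formulas \eqref{eqao3} and \eqref{eqae3}. Your explicit verification that the extra boundary term at $k=-n$ vanishes because $\binom{n}{j-n-1}=0$ is a detail the paper passes over silently, and it is exactly the right justification for the widened summation range.
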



\section{Comparison with known results}\label{sec:comparison} 
To date, numerous results have been obtained on refined enumerations of ASMs. 
Recall the following result due to Aigner \cite{Ai21}. 
\begin{theorem}[Aigner]\label{ai}
    The $(2 + q + q^{-1})$-enumeration of alternating sign matrices is equal to
    \begin{equation*}
			A_n(2+q+q^{-1},1,1) =  \det{\left(\binom{i+j-2}{j-1}\frac{1-(-q)^{j-i+1}}{1+q}\right)_{1 \le i,j \le n}}.
    \end{equation*}
\end{theorem}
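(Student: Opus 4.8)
The plan is to derive Aigner's formula from the determinantal expression the paper has already produced, rather than to reconstruct Aigner's original argument (an equally valid but more laborious alternative would be to take the homogeneous limit of the Izergin--Korepin determinant of Theorem~\ref{th:main} directly, but the paper-internal route is more economical). By Identity~\eqref{eqco2}, the $(2+q+q^{-1})$-enumeration is
\[
A_n(2+q+q^{-1},1,1) = \frac{\det\bigl(L_{i,j}(q)\bigr)_{1\le i,j\le n}}{(2-q-q^{-1})^{\frac{n(n-1)}{2}}},
\]
with $L_{i,j}(q)$ as in Corollary~\ref{cor:JRL}. Since Theorem~\ref{ai} asserts that the same quantity equals $\det\bigl(\binom{i+j-2}{j-1}\frac{1-(-q)^{j-i+1}}{1+q}\bigr)_{1\le i,j\le n}$, it suffices to prove the purely algebraic determinant identity
\[
\det\bigl(L_{i,j}(q)\bigr)_{1\le i,j\le n} = (2-q-q^{-1})^{\frac{n(n-1)}{2}}\det\left(\binom{i+j-2}{j-1}\,\frac{1-(-q)^{j-i+1}}{1+q}\right)_{1\le i,j\le n}.
\]
This turns a statement about ASM enumeration into an identity between two explicit matrices, which I would attack by linear algebra alone.

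First I would put $L_{i,j}(q)$ into single-index form: the substitution $m=j+k-1$ collapses the sum (because $\binom{n}{j+k-1}$ vanishes outside $0\le m\le n$) to
\[
L_{i,j}(q) = \sum_{m=0}^{n}(-1)^{m-j}\,\frac{q^{\,m-j+1}-q^{-(m-j+1)}}{q-q^{-1}}\binom{j+m-2}{i-1}\binom{n}{m},
\]
so the row index $i$ enters only through $\binom{j+m-2}{i-1}$. The goal of the calculation is to exhibit a factorization $\bigl(L_{i,j}(q)\bigr)=B\,G$, or equivalently a sequence of column operations, in which $B$ is Aigner's matrix and $G$ is triangular with diagonal $(2-q-q^{-1})^{0},(2-q-q^{-1})^{1},\dots,(2-q-q^{-1})^{n-1}$; taking determinants then produces the prefactor $\prod_{l=0}^{n-1}(2-q-q^{-1})^{l}=(2-q-q^{-1})^{n(n-1)/2}$. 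The conversion of the weights $\frac{q^{k}-q^{-k}}{q-q^{-1}}$ into powers of $2-q-q^{-1}$ is governed by the same identity $(q^2-1)^2=q^2(2+q+q^{-1})(q+q^{-1}-2)$ used in the proof of Theorem~\ref{th:main}, while the $q$-integer $\frac{1-(-q)^{j-i+1}}{1+q}=\sum_{l\ge 0}(-q)^{l}$ should surface as a telescoped geometric sum produced by the alternating factor $(-1)^{m-j}$. Throughout, Pascal's rule $\binom{a}{b}=\binom{a-1}{b}+\binom{a-1}{b-1}$---the very tool used to pass from \eqref{mm} to the matrix $K^{\rho,\tau}$---is what I would use to reshape the binomials at each step.

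The hard part will be decoupling the entangled indices inside $\binom{j+m-2}{i-1}$, where $i$ and $j$ appear together and so no naive Cauchy--Binet applies. My preferred device is the Vandermonde split
\[
\binom{j+m-2}{i-1} = \sum_{a}\binom{j-1}{a}\binom{m-1}{i-1-a},
\]
which separates the $j$-dependence from the $(m,i)$-dependence; expanding the weight as $\frac{q^{m-j+1}-q^{-(m-j+1)}}{q-q^{-1}}=\frac{q^{m+1}}{q-q^{-1}}q^{-j}-\frac{q^{-(m+1)}}{q-q^{-1}}q^{j}$ then removes the residual $j$--$m$ coupling, at the cost of doubling the inner index, and restores a genuine matrix product to which Cauchy--Binet can be applied. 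The delicate step is then to show that, among the resulting sum over $n$-subsets, the surviving contribution is exactly $\det\bigl(\binom{i+j-2}{j-1}\frac{1-(-q)^{j-i+1}}{1+q}\bigr)$ times the expected power of $2-q-q^{-1}$---that is, to match the residual matrix entrywise, not merely up to determinant. I would pin down the signs and normalizations by first verifying the cases $n=1,2$ by direct computation, and only then carry out the general bookkeeping.
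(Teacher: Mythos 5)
The paper offers no proof of Theorem~\ref{ai} to compare against: it is quoted verbatim from Aigner \cite{Ai21}, so your attempt has to stand entirely on its own. Your reduction step is logically sound and not circular: Identity~\eqref{eqco2} is established in the paper via Izergin--Korepin and Lascoux, independently of Aigner, and your single-index rewriting of $L_{i,j}(q)$ under $m=j+k-1$ is correct. But observe what the reduction buys: given \eqref{eqco2}, the algebraic identity
\begin{equation*}
\det\bigl(L_{i,j}(q)\bigr)_{1\le i,j\le n} \;=\; (2-q-q^{-1})^{\frac{n(n-1)}{2}}\,
\det\left(\binom{i+j-2}{j-1}\frac{1-(-q)^{j-i+1}}{1+q}\right)_{1\le i,j\le n}
\end{equation*}
is \emph{equivalent} to Theorem~\ref{ai} --- indeed it is precisely the unnumbered corollary that the paper derives \emph{from} Aigner's theorem in Section~\ref{sec:comparison}. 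So your proposal defers the entire mathematical content of the theorem to this identity, and for the identity you supply only a plan, not a proof.

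That plan has a genuine gap at its center. You posit a factorization $(L_{i,j}(q))=BG$ with $B$ Aigner's matrix and $G$ triangular with diagonal $(2-q-q^{-1})^{0},\dots,(2-q-q^{-1})^{n-1}$, but you give no construction and no argument that such a $G$ exists; equality of determinants up to that prefactor does not imply a triangular transition matrix between these two specific matrices, and the authors themselves flag this as elusive (they remark after Corollary~\ref{cor:compar} that they do not know how to convert the analogous matrices into one another by elementary row and column operations, and in the introduction that such identities ``seem to lack simple direct proofs''). The Vandermonde split of $\binom{j+m-2}{i-1}$ plus Cauchy--Binet does yield a legitimate expansion over subsets, but evaluating that subset sum and matching it entrywise to Aigner's matrix is exactly the original difficulty, which you explicitly concede as ``the delicate step'' and leave unexecuted; checking $n=1,2$ pins down normalizations but proves nothing in general. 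To complete a proof you would need either to carry out this matching in full, or to abandon the paper-internal route in favor of the alternative you mention and set aside --- working directly from the partition function (e.g., the standard Korepin-style characterization by degree bounds, symmetry, and recursion, which is the environment of Aigner's own argument) rather than hoping the two determinants are related by a visible matrix factorization.
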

Comparing Aigner's Theorem~\ref{ai} and \eqref{eqco2}, we obtain 
\begin{corollary} For any positive integer $n$, the following identity holds:
\begin{equation*}
	 \det{\bigl(L_{i,j}(q)\bigr)_{1 \le i,j \le n}} 
    = {(2-q-q^{-1})^{\frac{n(n-1)}{2}}}
		\det{\left(\binom{i+j-2}{j-1}\frac{1-(-q)^{j-i+1}}{1+q}\right)_{1 \le i,j \le n}},
\end{equation*}
where $L_{i,j}(q)$ is defined in Corollary \ref{cor:JRL}.
\end{corollary}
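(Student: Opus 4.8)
The plan is to prove the identity by observing that both determinants compute one and the same scalar, the $(2+q+q^{-1})$-enumeration $A_n(2+q+q^{-1},1,1)$, and then simply to equate the two closed forms. Aigner's Theorem~\ref{ai} expresses $A_n(2+q+q^{-1},1,1)$ as $\det\left(\binom{i+j-2}{j-1}\frac{1-(-q)^{j-i+1}}{1+q}\right)_{1\le i,j\le n}$, while the rightmost equality in Identity~\eqref{eqco2}, obtained by setting $\rho=\tau=1$ in Corollary~\ref{cor:tau=1} and simplifying, expresses the very same quantity as $(2-q-q^{-1})^{-n(n-1)/2}\det(L_{i,j}(q))_{1\le i,j\le n}$.

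First I would set these two expressions for $A_n(2+q+q^{-1},1,1)$ equal to one another. Multiplying through by $(2-q-q^{-1})^{n(n-1)/2}$ then isolates $\det(L_{i,j}(q))_{1\le i,j\le n}$ on the left and yields precisely the claimed equality. To legitimise this as an identity rather than merely a numerical coincidence, I would note that both sides are rational functions of $q$: each equals the Laurent polynomial in $q$ obtained from the polynomial $A_n(z,1,1)=\sum_{A\in\ASM(n)}z^{\mu(A)}$ under the substitution $z=2+q+q^{-1}$. Since the two determinantal expressions agree as generating functions for every admissible value of $q$, they agree as rational functions, and the identity therefore holds for all positive integers $n$ and all $q$.

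I do not expect a genuine analytic or combinatorial obstacle: the argument is a one-line consequence of possessing two independent determinantal formulas for the same quantity, one supplied by Aigner and one by Identity~\eqref{eqco2}. The only point worth stressing is that the resulting equality is striking on its own terms, since the matrices $(L_{i,j}(q))$ and $\left(\binom{i+j-2}{j-1}\frac{1-(-q)^{j-i+1}}{1+q}\right)$ have visibly unrelated entries — a signed binomial sum over the shifted index $k$ versus a single two-term expression — and no transparent sequence of elementary row and column operations carrying one determinant into the other (up to the explicit factor $(2-q-q^{-1})^{n(n-1)/2}$) seems to be available. In that sense the statistical-mechanical route through $A_n$ is doing real work that a purely linear-algebraic proof would have to reproduce, and we leave the search for such a direct argument open.
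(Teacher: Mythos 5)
Your proposal is correct and is essentially identical to the paper's own argument: the paper obtains this corollary precisely by comparing Aigner's Theorem~\ref{ai} with Identity~\eqref{eqco2} (the $\rho=1$ specialization of Corollary~\ref{cor:tau=1}), both of which express $A_n(2+q+q^{-1},1,1)$, and then clearing the factor $(2-q-q^{-1})^{\frac{n(n-1)}{2}}$. The only nitpick is that Corollary~\ref{cor:tau=1} is already the $\tau=1$ case of Theorem~\ref{th:main}, so one only sets $\rho=1$ there; this does not affect the validity of your argument.
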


Setting $q=I$ in Theorem~\ref{ai}, we obtain the following formula for the $2$-enumeration of ASMs:
\begin{equation*}
		A_n(2,1,1) =  \det{\left(\binom{i+j-2}{j-1}\sigma(k)\right)}_{1 \leq i,j \leq n}
\end{equation*}
    with $k=j-i+1$, $\sigma(k+4m)=\sigma(k)$, and 
    $$
\sigma(k) = 
\begin{cases}
0, & \text{if } k \equiv 0 \pmod{4}; \\
1, & \text{if } k \equiv 1 \pmod{4}; \\
1-I, & \text{if } k \equiv 2 \pmod{4}; \\
-I, & \text{if } k \equiv 3 \pmod{4}.
\end{cases}
$$

In what follows, we compare our results with those of others. 
As multiple statistics are involved, we focus on the enumeration according to the number of $-1$ entries for simplicity. 
For convenience, we mainly compare the $2$-enumeration of ASMs.
Let us collect the $2$-enumerations of ASMs by Aigner, Behrend et al.\cite{BEZ13}, and ourselves together in a single corollary.

Let $\Delta_{i,j+1}$ denotes the Kronecker delta, 
  defined by $\Delta_{i,j+1} = 1$ if $i = j+1$, 
  and $\Delta_{i,j+1} = 0$ otherwise.

\begin{corollary}\label{cor:compar} 
For each positive integer $n$, let
\begin{align*}
\mathcal{M}_A &=  {\left(\binom{i+j-2}{j-1}\sigma(j-i+1)\right)}_{1 \leq i,j \leq n},
& \text{(Aigner)}
\\
\mathcal{M}_B &= {\bigl(T_{i,j}\bigr)}_{1 \le i,j \le n},
& \text{(Behrend et al.)}
\\
\mathcal{M}_C &= \left(\sum_{k=\lceil  -\frac{n}{2}\rceil }^{ \lfloor \frac{n-1}{2} \rfloor}
(-1)^{k}2^{1-j}\binom{2j+2k-2}{i-1}\binom{n}{j+2k}\right)_{1\leq i,j\leq n},
& \text{(H. and Y.)}
\end{align*}
where 
$$
 T_{i,j} = -\Delta_{i, j+1} + 
 \begin{cases}
 \displaystyle
 \sum_{k=0}^{\min(i-1, j)} \binom{i-2}{i-k-1} \binom{j}{k} 2^{i-k-1}, & \mbox{if } \,\, j \leq n-1, \\[12pt]
 \displaystyle
 \sum_{k=0}^{i-1} \sum_{\ell=0}^k \binom{i-2}{i-k-1} \binom{n-\ell-1}{k-\ell} 2^{i-k-1}, & \mbox{if } \,\, j = n.
 \end{cases}
$$
Then, 
$$
\det(\mathcal{M}_A)
=\det(\mathcal{M}_B)
=\det(\mathcal{M}_C)=A_n(2,1,1).
$$
\end{corollary}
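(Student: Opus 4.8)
The plan is to prove each of the three determinants separately equals the $2$-enumeration $A_n(2,1,1) = 2^{\frac{n(n-1)}{2}}$ recorded in \eqref{eqc126}, rather than attempting to transform the three matrices into one another directly, since the row and column operations relating them are unwieldy. In fact two of the three equalities have already surfaced in the preceding discussion, so the argument is in large part an assembly; the content of the corollary is that three structurally unrelated matrices happen to share this common determinant precisely because each one computes the $2$-enumeration.

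For $\mathcal{M}_A$, the equality $\det(\mathcal{M}_A) = A_n(2,1,1)$ is exactly the specialization $q = I$ of Aigner's Theorem~\ref{ai} displayed just above the corollary: there $2 + q + q^{-1} = 2$, and the weight $\frac{1-(-q)^{j-i+1}}{1+q}$ becomes $\sigma(j-i+1)$. So this case requires nothing beyond quoting that specialization.

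For $\mathcal{M}_C$, I would compare its entries with those of $L_{i,j}(I)$, whose explicit form is worked out just before Corollary~\ref{co12}. That computation shows the $(i,j)$ entry of $\mathcal{M}_C$ is precisely $2^{1-j}L_{i,j}(I)$; in other words $\mathcal{M}_C$ arises from $\bigl(L_{i,j}(I)\bigr)$ by rescaling its $j$-th column by the factor $2^{1-j}$. Consequently $\det(\mathcal{M}_C) = \bigl(\prod_{j=1}^{n}2^{1-j}\bigr)\det\bigl(L_{i,j}(I)\bigr) = 2^{-\frac{n(n-1)}{2}}\det\bigl(L_{i,j}(I)\bigr)$, and invoking Corollary~\ref{co12}, which gives $\det\bigl(L_{i,j}(I)\bigr) = 2^{n(n-1)}$, yields $\det(\mathcal{M}_C) = 2^{\frac{n(n-1)}{2}} = A_n(2,1,1)$.

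The remaining and genuinely external equality is $\det(\mathcal{M}_B) = A_n(2,1,1)$, and this is where I expect the main obstacle to lie. The strategy is to identify the explicit matrix $\bigl(T_{i,j}\bigr)$ with the matrix of Behrend, Di Francesco, and Zinn-Justin \cite{BEZ13} whose determinant gives their refined generating function, and then to specialize their parameters to the pure bulk statistic weighting each $-1$ by $2$. The difficulty is in verifying this identification faithfully: one must confirm that the stated closed form for $T_{i,j}$, including its Kronecker-delta correction $-\Delta_{i,j+1}$ and the distinct summation in the boundary column $j=n$, matches the matrix of \cite{BEZ13} entry by entry after the appropriate substitution, and that their general determinantal formula collapses to $2^{\frac{n(n-1)}{2}}$ under that specialization. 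Once the three evaluations are secured, the chain $\det(\mathcal{M}_A) = \det(\mathcal{M}_B) = \det(\mathcal{M}_C) = A_n(2,1,1)$ is immediate.
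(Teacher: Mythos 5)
Your proposal is correct and matches the paper's (implicit) argument exactly: the paper offers no separate proof of Corollary~\ref{cor:compar}, since $\det(\mathcal{M}_A)=A_n(2,1,1)$ is Aigner's Theorem~\ref{ai} at $q=I$, $\det(\mathcal{M}_C)=A_n(2,1,1)$ follows from the column rescaling $\mathcal{M}_C=\bigl(2^{1-j}L_{i,j}(I)\bigr)$ together with Identity~\eqref{eq:A211} (equivalently Corollary~\ref{co12} plus \eqref{eqc126}), and $\det(\mathcal{M}_B)=A_n(2,1,1)$ is quoted from Behrend et al.~\cite{BEZ13}, which is precisely your assembly. Your flagged ``obstacle'' for $\mathcal{M}_B$ is handled by the paper in the same citation-level way, so nothing further is missing relative to the paper's own treatment.
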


For example, if $n=4$, we have
\begin{align*}
\mathcal{M}_A &= 
\begin{pmatrix}
1 & 1-I & -I & 0 \\
0 & 2 & 3(1-I) & -4I \\
-I & 0 & 6 & 10(1-I) \\
1-I & -4I & 0 & 20
\end{pmatrix},\\
\mathcal{M}_B &=
\begin{pmatrix}
1 & 1 & 1 & 1 \\
0 & 2 & 3 & 4 \\
2 & 4 & 9 & 14 \\
4 & 12 & 24 & 44
\end{pmatrix},\quad
\mathcal{M}_C =
\begin{pmatrix}
0 & 2 & 0 & -1/2 \\
-8 & 4 & 2 & -2 \\
-4 & 0 & 5 & -5/2 \\
0 & -2 & 4 & -1/2
\end{pmatrix}.
\end{align*}
We verify that $
\det(\mathcal{M}_A)
=\det(\mathcal{M}_B)
=\det(\mathcal{M}_C)
= 64
$.

While their determinants are equal,  the forms of the three matrices 
$\mathcal{M}_A, \mathcal{M}_B, \mathcal{M}_C$ are quite different; 
we don't know how to convert one to another by elementary row and column operations.


\section{A decomposition conjecture}\label{sec:conj} 
Write $A_n(z):=A_n(z,1,1)$ for short.
A decomposition conjecture concerning $A_n(z)$ was proposed by Mills, Robbins, and Rumsey \cite{MRR2} and later proved by Kuperberg~\cite{Ku96, Ku02}, as stated below.

\begin{theorem}[Kuperberg]\label{th:decomp}
	There exists a family of polynomials $(B_n(z))_{n\geq 1}$ such that 
    \begin{align*}
        A_{2m+1}(z)&=B_{2m+1}(z)B_{2m+2}(z); \\
        A_{2m}(z)&=2B_{2m}(z)B_{2m+1}(z). 
    \end{align*}
\end{theorem}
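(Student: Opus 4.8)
The plan is to prove the existence of the family $(B_n(z))_{n\geq 1}$ by factoring the sequence $A_n(z)$ directly, exploiting the determinantal structure provided by Theorem~\ref{th:main} specialized to $\rho=\tau=1$. Recall from \eqref{eqco2} that
\begin{equation*}
	A_n(z) = \frac{\det\bigl(L_{i,j}(q)\bigr)_{1\le i,j\le n}}{(2-q-q^{-1})^{\frac{n(n-1)}{2}}},
	\qquad z = 2+q+q^{-1},
\end{equation*}
so that $A_n(z)$ is (up to the explicit prefactor) the determinant of an $n\times n$ matrix whose entries are fixed polynomials in $q$. The natural strategy is to show that each determinant $\det\bigl(L_{i,j}(q)\bigr)$ factors as a product of two determinants of roughly half the size, or equivalently that the matrix can be brought by elementary operations into block-triangular form whose diagonal blocks depend only on consecutive indices.

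First I would define $B_n(z)$ not by an independent formula but recursively, by \emph{declaring} $B_1(z)=1$ and then setting $B_{2m+1}(z) = A_{2m+1}(z)/B_{2m+2}(z)$ and $B_{2m}(z) = A_{2m}(z)/\bigl(2B_{2m+1}(z)\bigr)$ once the neighboring $B$ is known; the content of the theorem is then that all these quotients are genuine polynomials and that the two formulas are consistent. Concretely, I would prove the single statement that $A_{n}(z)/A_{n-1}(z)$, appropriately normalized by the factor of $2$ in even degree, is itself a polynomial $B_{n+1}(z)$ in $z$, and that the resulting $B_n(z)$ satisfy the stated two identities. The key algebraic fact to establish is therefore a \textbf{divisibility relation} among the polynomials $A_n(z)$: that $A_{n-1}(z)$ divides $A_n(z)$ in the even case and that the odd case splits analogously. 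I expect this to follow from a Desnanot--Jacobi (Dodgson condensation) type identity applied to the determinant $\det\bigl(L_{i,j}(q)\bigr)$, together with the known low-order enumerations \eqref{eqc126}, \eqref{eqao3}, \eqref{eqae3} that pin down the initial polynomials and confirm that the $2$-enumeration contributes exactly the factor of $2$ appearing in the even case.

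The main obstacle will be showing that the quotients are \emph{polynomials}, not merely rational functions: dividing two determinants gives an a priori rational expression in $z$, and one must rule out denominators. I would attack this by producing an explicit common refinement, namely by exhibiting a sequence of elementary row and column operations (using Pascal's identity on the binomial coefficients inside $L_{i,j}(q)$, exactly as in the reduction from $M_{i,j}$ to $M'_{i,j}$ in the proof of Theorem~\ref{th:main}) that renders the matrix block-triangular with the two diagonal blocks being precisely the matrices computing $B_n(z)$ and $B_{n+1}(z)$. Verifying that such operations exist and produce integer-coefficient polynomial entries is the delicate combinatorial step, and it is where the parity split between the even and odd cases naturally emerges from the $\lceil -n/2\rceil$ and $\lfloor (n-1)/2\rfloor$ summation ranges.

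Once the factorization of the determinant is in hand, the two displayed identities follow by matching the explicit prefactor $(2-q-q^{-1})^{n(n-1)/2}$ against the product of the two half-size prefactors; the exponent $\tfrac{n(n-1)}{2}$ splits additively across consecutive indices in exactly the way required, which is a short arithmetic check. I would conclude by verifying the base cases $n=1,2,3$ directly from the explicit matrices, thereby fixing $B_1(z),B_2(z),B_3(z)$ and confirming that the recursion is well-posed and that the factor of $2$ in the even identity is correct.
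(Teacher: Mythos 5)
The paper itself gives no proof of this statement: it is quoted as Kuperberg's theorem (proved in \cite{Ku96, Ku02}, originally conjectured by Mills, Robbins, and Rumsey), so your proposal must stand on its own merits, and it has a genuine gap at its center. Your recursive definition of $B_n(z)$ as successive quotients of the $A_n(z)$ makes the theorem \emph{equivalent} to the polynomiality of those quotients, and neither mechanism you offer delivers that. First, Dodgson condensation cannot produce the needed divisibility from $\det\bigl(L_{i,j}(q)\bigr)_{1\le i,j\le n}$: the entries $L_{i,j}(q)$ contain $\binom{n}{j+k-1}$ and a summation range depending on $n$, so the connected minors of the $n\times n$ matrix are \emph{not} the matrices computing $A_{n-1}$ or $A_{n-2}$; the matrix family is not nested, and condensation in any case yields a quadratic relation among determinants of one matrix and its minors, not a factorization $A_{2m+1}=B_{2m+1}B_{2m+2}$. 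Second, and more fundamentally, even if you could bring each $L$-matrix to block-triangular form, a factorization of the single determinant $A_n = P_n Q_n$ for each $n$ separately proves nothing: the entire content of the theorem is the \emph{interlocking} condition that the factor $Q_n$ of $A_n$ coincides (up to the constant $2$) with the factor $P_{n+1}$ of $A_{n+1}$. Your plan asserts that the diagonal blocks are ``precisely the matrices computing $B_n(z)$ and $B_{n+1}(z)$,'' but $B_n$ has no independent definition in your setup, so this is circular; and the matrix $L_{i,j}(q)$ has no visible centrosymmetry or other involution under $i\mapsto n+1-i$, $j\mapsto n+1-j$ that would induce an even/odd splitting.

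What is actually needed, and what Kuperberg supplies, is an \emph{intrinsic} identification of the factors: $B_n(z)$ is realized as the partition function of a half-size six-vertex model with U-turn boundary (connected to the horizontally symmetric ASMs appearing in Section~\ref{sec:conj}), which has its own Izergin--Korepin-type determinant formula; the product identities then become identities between explicit determinants, with the factorization forced by a symmetry of the full partition function in its spectral parameters \emph{before} specialization -- a symmetry that is destroyed by the specialization $\rho=\tau=1$, $x_i=\frac12$, $y_j=0$ you start from. Checking base cases $n\le 3$ and matching the exponent $\frac{n(n-1)}{2}$ of the prefactor, as you propose, confirms consistency but cannot substitute for this identification. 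To repair the argument you would need either (a) an explicit determinant formula for $B_n(z)$ (e.g.\ via the U-turn boundary partition function) together with a proof of the two product identities as determinant identities, or (b) a genuinely nested family of matrices for $A_n(z)$ to which a condensation or Schur-complement argument applies -- neither of which the matrices $L_{i,j}(q)$ of Corollary~\ref{cor:JRL} provide.
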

Moreover, Mills, Robbins, and Rumsey further conjectured that for $n=2m+1$ odd, 
the polynomial $B_n(z)$ is the $z$-enumeration of horizontally symmetric ASMs of order $n$, 
where the weight of an ASM is $z^k$ if there are $k$ $-1$'s to the top of the middle row, 
i.e.,
$$
B_{2m+1}(z) = \sum_{A \in \HSASM(2m+1)}z^{\frac{\mu(A)- m}{2}}.
$$
In this identity, $\HSASM(n)$ denotes the set of all $n\times n$ horizontally symmetric ASMs.

We observe that this decomposition conjecture can be refined. Write $A_n(z,\rho):=A_n(z,\rho,1)$ and 
define
$$
B_{2m+1}(z,\rho) = \sum_{A \in \HSASM(2m+1)}z^{\frac{\mu(A)- {m}}{2}} \rho^{f(A)-2}.
$$
Note that the middle row of a horizontally symmetric ASM is always of the form $(1,-1,1,-1,\dots,1)$, and the entry $1$ in the first row cannot appear in either the first or the last column. It follows that $2 \leq f(A) \leq n-1$.
Therefore, we propose the following conjecture.

\begin{conjecture}
	There exists a family of polynomials $(B_{2m}(z,\rho))_{m\geq 1}$ in $\{z,\rho\}$ such that,
	for $m\geq 1$,
    \begin{align*}
			 A_{2m+1}(z,\rho)&=\rho B_{2m+1}(z,1)B_{2m+2}(z,\rho),\\
			  A_{2m}(z,\rho)&=\rho(\rho+1)B_{2m}(z,1)B_{2m+1}(z,\rho).
    \end{align*}
\end{conjecture}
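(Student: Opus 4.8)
The plan is to reduce the conjecture to two sharply stated claims about \emph{known} quantities and then to prove them by refining Kuperberg's partition-function proof of Theorem~\ref{th:decomp} so that it retains the first-row statistic $f$. Only the even-order family $B_{2m}(z,\rho)$ is genuinely new: the odd-order polynomials $B_{2m+1}(z,\rho)$ are \emph{defined} through $\HSASM(2m+1)$, and the specialisation $B_{2m}(z,1)$ is forced by Theorem~\ref{th:decomp} to equal the Kuperberg polynomial $A_{2m-1}(z,1)/B_{2m-1}(z,1)$. Setting $\rho=1$ in the two asserted identities and using $\rho(\rho+1)\big|_{\rho=1}=2$ recovers Theorem~\ref{th:decomp} verbatim, so the conjecture is consistent and its content splits cleanly: \textbf{(A)} the even identity $A_{2m}(z,\rho)=\rho(\rho+1)\,B_{2m}(z,1)\,B_{2m+1}(z,\rho)$ is a concrete polynomial identity among objects all already available, since the right-hand side uses only the HSASM generating function and the Kuperberg polynomial while the left-hand side is computed from Corollary~\ref{cor:tau=1}; and \textbf{(B)} the odd identity amounts to the divisibility $\rho\,B_{2m+1}(z,1)\mid A_{2m+1}(z,\rho)$ in $\mathbb{Q}[z,\rho]$, after which $B_{2m+2}(z,\rho)$ is \emph{defined} as the quotient and automatically satisfies $B_{2m+2}(z,1)=B_{2m+2}(z)$.

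The central analytic input for both (A) and (B) would be a refinement of the reflection-symmetry factorization of the square ice partition function. Using Theorem~\ref{th:A=CZ} I would realise $A_n(z,\rho)$ as a specialised partition function $\mathcal{Z}_n$ in which a single spectral parameter, that of the first row, is kept generic and carries $\rho$ (this is the role of $\alpha$ with $\beta=0$, equivalently $\tau=1$). Horizontally symmetric ASMs of odd order $2m+1$ arise as the configurations fixed by the top--bottom reflection $A_{i,j}\mapsto A_{2m+2-i,j}$, and their refined generating function is precisely $B_{2m+1}(z,\rho)$. The key step is to show that, when the horizontal spectral parameters are placed in reflection-symmetric position, the Izergin--Korepin determinant of Theorem~\ref{ikthv1}---equivalently the Lascoux determinant $F_{q^2}$ of Theorem~\ref{zthuv}---factors into a half-size HSASM factor times a complementary factor. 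Tracking where the single generic, $\rho$-carrying, parameter is routed in this factorization is exactly what makes one factor $\rho$-free and the other $\rho$-dependent, thereby producing the asserted products and constructing the new even-order factor.

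A useful bookkeeping guide, and an immediate necessary condition, comes from comparing supports in $\rho$. For $A\in\HSASM(2m+1)$ one has $f(A)=\ell(A)$ and $2\le f(A)\le 2m$, so $B_{2m+1}(z,\rho)$ is supported on $\rho^{0},\dots,\rho^{2m-2}$; the prefactor $\rho(\rho+1)$ should then encode a two-to-one correspondence in which the two order-$2m$ matrices associated with a given $A\in\HSASM(2m+1)$ carry first-row positions $f(A)-1$ and $f(A)$, so that $\rho^{f(A)-2}\cdot\rho(\rho+1)=\rho^{f(A)-1}+\rho^{f(A)}$; this simultaneously refines the factor $2$ of Theorem~\ref{th:decomp} and matches the support $\rho^{1},\dots,\rho^{2m}$ of $A_{2m}(z,\rho)$. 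Read through the explicit determinant of Corollary~\ref{cor:tau=1}, the same factorization turns (A) and (B) into factorization statements for $\det\bigl(K^{\rho,1}_{i,j}(q)\bigr)$; these can be tested for small $n$ and then attacked by the very row and column operations that led to \eqref{mm}, giving an independent, purely determinantal route should the symmetry argument prove unwieldy.

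I expect the main obstacle to be precisely the $\rho$-refinement of the factorization: Kuperberg's argument establishes the decomposition at the combinatorial point, where the partition function is evaluated at fixed spectral data, whereas here one must carry a full generic parameter through the reflection-symmetric specialisation and control how it distributes between the two factors. A closely related difficulty is the statistic mismatch, since $A_n(z,\rho)$ refines only the first row while the horizontal symmetry that produces the odd factors is natural for the pair $(f,\ell)$ with $f=\ell$; reconciling these will require either a refined bijection matching the first-row position of an order-$2m$ ASM with that of an order-$(2m+1)$ HSASM, or a proof that the complementary determinant factor is genuinely independent of $\rho$. Obtaining the odd divisibility (B) as a structural by-product of this factorization, rather than by an ad hoc argument, is the cleanest outcome I would aim for.
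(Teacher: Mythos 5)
There is an important mismatch of expectations here: the paper does \emph{not} prove this statement. It is posed as a conjecture, supported only by the explicit computations for $n\le 5$ shown in Section~\ref{sec:conj} and by machine verification up to $n=9$. Your proposal does not prove it either, and it is worth being precise about where its actual content lies. The parts you genuinely establish --- that setting $\rho=1$ recovers Theorem~\ref{th:decomp}; the support bookkeeping $2\le f(A)\le 2m$ for $A\in\HSASM(2m+1)$, so that $\rho^{f(A)-2}\cdot\rho(\rho+1)=\rho^{f(A)-1}+\rho^{f(A)}$ matches the support of $A_{2m}(z,\rho)$; and the remark that once the divisibility $\rho\,B_{2m+1}(z,1)\mid A_{2m+1}(z,\rho)$ is known one may define $B_{2m+2}(z,\rho)$ as the quotient --- are correct, but they are necessary-condition bookkeeping already implicit in the paper's formulation (note also that identifying $B_{2m+1}(z,1)$ and $B_{2m}(z,1)$ with Kuperberg's $B_n(z)$ silently uses the Mills--Robbins--Rumsey identification of the odd factors with $\HSASM$ enumeration). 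The entire mathematical content of the conjecture sits in the one step you defer, the $\rho$-refined factorization of the partition function, and you concede you cannot carry it out. A plan whose central lemma is ``refine Kuperberg's factorization'' is not a proof of a statement that is, precisely, a proposed refinement of Kuperberg's factorization.

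Moreover, the specific mechanism you propose meets a concrete structural obstruction, not merely a technical one. Your key step requires the row spectral parameters to be placed in reflection-symmetric position so that the Izergin--Korepin determinant of Theorem~\ref{ikthv1} commutes with the top--bottom involution and splits. But in the paper's parametrization (Theorem~\ref{th:A=CZ}), the generating function $A_n(z,\rho,1)$ appearing in your claims (A) and (B) corresponds to $x_1=\frac12+\alpha$ generic and $x_n=\frac12+\beta$ with $\beta=0$, and this spectral data is \emph{not} flip-invariant. Restoring flip-invariance forces $\beta=\alpha$, and then the dictionary $\rho=-[\alpha-\frac12]/[\alpha+\frac12]$, $\tau=-[\beta+\frac12]/[\beta-\frac12]$ gives $\rho\tau=1$: the symmetric placement computes $\sum_A z^{\mu(A)}\rho^{f(A)-\ell(A)}$, i.e.\ $A_n(z,\rho,\rho^{-1})$, not $A_n(z,\rho,1)$. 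Two further conflations compound this: symmetric placement of parameters does not restrict the sum \eqref{def:Zn} to flip-fixed configurations, so no $\HSASM$ factor arises from it on combinatorial grounds; and the known partition-function expressions for symmetric classes such as $\HSASM$ live on U-turn (reflecting) boundary conditions in Kuperberg's framework \cite{Ku02}, which is a different object from any specialization of $Z_n(X,Y)$. Hence the complementary factor has no a priori reason to be $\rho$-free, and your fallback --- row and column operations on $\det\bigl(K^{\rho,1}_{i,j}(q)\bigr)$ as in \eqref{mm} --- preserves the determinant but yields no factorization. In short: your reductions are sound and your small-case checks agree with the paper's, but the key lemma is missing, and the route you describe for it cannot work as stated.
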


We verify the above conjecture for small integers $n$. 
By Theorem \ref{th:decomp}, we have
$$
B_2(z,1)= B_3(z,1)=1, \ 
B_4(z,1)=z + 6, \ 
B_5(z,1)= z + 2.
$$
By the definition of $B_{2m+1}(z,\rho)$, we have
$$
B_3(z,\rho)=1, \ 
B_5(z,\rho)=\rho^2 + \rho z + 1.
$$
By the definitions of $A_n(z,\rho)$, we have
\begin{align*}
	A_2(z, \rho)&=\rho(\rho+1) = \rho(\rho+1) B_2(z,1) B_3(z,\rho) ,\\
	A_3(z, \rho)&=\rho \cdot (2\rho^2+\rho z+2 \rho +2) = \rho B_3(z,1) B_4(z,\rho),\\
	A_4(z,\rho) &= \rho (\rho + 1) (z + 6) (\rho^2 + \rho z + 1)   = \rho(\rho+1) B_4(z,1) B_5(z,\rho),\\
	A_5(z, \rho) &= \rho B_5(z,1) B_6(z,\rho),
\end{align*}
where 
\begin{align*}
B_4(z,\rho)&=2\rho^2+\rho z+2 \rho +2,\\
B_6(z,\rho)&= z^3 \rho^2 + 3z^2 \rho^3 + 2z\rho^4 + 6z^2 \rho^2 
	 + 20z\rho^3 + 12 \rho^4 + 3z^2 \rho + 26 z \rho^2 + 12 \rho^3 \\
	 &\qquad + 20z \rho + 12\rho^2 + 2z + 12\rho + 12.
\end{align*}
The conjecture has been verified for $A_n(z,\rho)$ for all $n \leq 9$ by computer.


\section*{Acknowledgments}
The second author was supported by the China Scholarship Council.


\end{document}